\newcommand{\M}{\overline{\mathcal{M}}}
\newcommand{\PP}{\mathbb{P}}
\newcommand{\OO}{\mathcal{O}}
\newtheorem{theorem}{Theorem}[section]
\newtheorem{thm}[theorem]{Theorem}
\newtheorem{lemm}[theorem]{Lemma}
\newtheorem{prop}[theorem]{Proposition}
\newtheorem{defi}[theorem]{Definition}
\newtheorem{question}[theorem]{Question}
\newtheorem{remark}[theorem]{Remark}
\numberwithin{equation}{section}
\title []{Triviality and Split of Vector Bundles on Rationally Connected Varieties}
\author {xuanyu pan}
\address{Department of Mathematics, Columbia University, New York, NY 10025}
\email{pan@math.columbia.edu}
\date{\today}
\begin{document}

\maketitle

\begin{abstract}
In this paper, we use the existence of a family of rational curves on a separably connected variety, which satisfies the Lefschetz condition, to give a simple proof of a triviality criterion due to I.Biswas and J.Pedro and P.Dos Santos. We also prove that a vector bundle on a homogenous space is trivial if and only if the restriction of the vector bundle to every Schubert line is trivial. Using this result and the theory of Chern classes of vector bundles, we give a general criterion for a uniform vector bundle on a homogenous space to be splitting. As an application, we prove that a uniform vector bundle on a classical Grassmannian of low rank is splitting.
\end{abstract}

\tableofcontents

\section {Introduction}
The study of uniform vector bundles on homogenous spaces starts from the papers \cite{S}, \cite{VDV}, and \cite{BE}. Recently, the paper \cite{UVF} give a more systematical approach. The proofs usually come with a triviality criterion for vector bundles.

In the preprint \cite{VBR}, it provides a beautiful triviality criterion for a vector bundle on a separably rationally connected variety. In Section $2$, we give a simple proof of this theorem by the existence of a family of rational curves on a separable variety, which satisfies the Lefschetz condition, see \cite[Section 8]{KO} for details. The main theorem is the following,

\begin{thm}
Let $X$ be a smooth projective variety over an algebraically closed field. If $X$ is separably rationally connected, then a vector bundle $V$ on $X$ is trivial if and only if the restriction of $V$ to every rational curve on $X$ is trivial.
\end{thm}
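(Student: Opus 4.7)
My plan is to exhibit $V$ as descended from the trivial bundle via a large family of rational curves. The ``only if'' direction is tautological. For the converse, fix a closed point $x_0 \in X$. Koll\'ar's theory of the Lefschetz condition for separably rationally connected varieties \cite[\S 8]{KO} will provide a smooth variety $W$, a $\PP^1$-bundle $\pi : \U \to W$ with a section $\sigma : W \to \U$, and an evaluation morphism $ev : \U \to X$, such that $ev \circ \sigma$ is the constant map to $x_0$ and $ev$ is a proper smooth surjection with geometrically connected fibers. Each fiber of $\pi$ maps to a rational curve on $X$, so the hypothesis forces $(ev^{*}V)|_{\pi^{-1}(w)}$ to be trivial for every $w \in W$.

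I would then descend triviality in two steps. First, a vector bundle on a $\PP^1$-bundle that is trivial on every fiber is pulled back from the base (Grauert's theorem applied to the pushforward), so $ev^{*}V \cong \pi^{*}E$ with $E := \pi_{*}(ev^{*}V)$. Pulling back by $\sigma$ gives $E \cong \sigma^{*}\pi^{*}E \cong (ev\circ\sigma)^{*} V \cong V_{x_0}\otimes_{k}\OO_W$, so $E$ is trivial and hence $ev^{*}V$ is trivial on $\U$. Second, since $ev$ is proper with geometrically connected reduced fibers one has $ev_{*}\OO_{\U} = \OO_X$, and the projection formula gives $ev_{*}ev^{*}V \cong V$. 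A trivializing frame $e_1,\dots,e_n$ of $ev^{*}V$ therefore yields global sections $s_1,\dots,s_n$ of $V$; on each fiber $ev^{-1}(y)$ the restrictions of the $e_i$ are constant elements of $V_y$ (because $H^0(ev^{-1}(y),\OO)=k$) and form a basis of $V_y$ there, so the $s_i$ trivialize $V$ globally.

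The main obstacle is the geometric input in the first paragraph: I need to extract from \cite[\S 8]{KO} a family whose evaluation $ev$ has geometrically connected fibers and which carries a section based at $x_0$. This is exactly where separable rational connectivity is used, and it amounts to a smoothing argument for chains of very free rational curves joining two general points. Once this Lefschetz-type family is in hand, both descent steps are essentially formal applications of cohomology and base change together with the projection formula. Note also that the argument uses only the triviality of $V$ on the $\PP^1$-fibers of $\pi$, so it would in fact yield the slightly stronger conclusion that triviality of $V$ on the curves of a single covering Lefschetz family already suffices.
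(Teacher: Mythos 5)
Your outline is the same as the paper's: descend $V$ through a based family of rational curves in two steps, first pushing forward along the fibration $\pi$ (cohomology and base change plus the section $\sigma$ to see the pushforward is trivial), then pushing forward along the evaluation map using $ev_*\OO_{\U}=\OO_X$ and the projection formula. The paper packages exactly these two steps as Lemma 2.2 and feeds it the Lefschetz-condition family of Lemma 2.3, so in spirit you have reproduced the argument.

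The one place you are too optimistic is the geometric input, and it is not merely a citation issue. You ask for a $\PP^1$-bundle $\pi:\U\to W$ whose evaluation $ev:\U\to X$ is \emph{proper}, \emph{smooth}, and has \emph{geometrically connected reduced fibers}. Koll\'ar's construction does not hand you this: the natural base $W$ (an open locus of very free curves through $x_0$, or its image in the Kontsevich space) is not proper, so $ev$ from $W\times\PP^1$ is not proper; and even after compactifying, the fibers of $ev$ are only \emph{generically} irreducible and reduced, and $ev$ is certainly not smooth. Both of your descent steps genuinely need properness of $ev$ (otherwise $ev_*ev^*V\cong V$ fails), so you must compactify, and the compactification forces you to allow nodal genus-$0$ fibers of $\pi$ -- which is harmless, since a bundle on a chain of rational curves is trivial iff it is trivial on each component, but it means $\pi$ is a flat family of arithmetic-genus-$0$ curves rather than a $\PP^1$-bundle, and the base-change step needs to be run in that generality. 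Likewise, $ev_*\OO_{\U}=\OO_X$ cannot be read off from connectedness of all fibers; the paper gets it from Stein factorization, using only that the \emph{geometric generic} fiber is integral together with normality of $X$ to conclude the finite part $Z\to X$ is an isomorphism. So the repair is exactly the paper's Lemma 2.2: take $B=\overline{Im(h)}$ inside the proper Kontsevich stack, keep only generic integrality of the evaluation fibers, and replace your connectedness assertion by the Stein factorization argument. Your closing remark (that triviality on the curves of a single covering family suffices) survives this repair and is indeed implicit in the paper's proof.
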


In Section $3$, we prove the irreducibility of some moduli spaces of rational chains on homogenous spaces to give a triviality criterion for vector bundles on homogenous spaces, namely,

\begin{thm}
For a homogenous space $G/P$, a vector bundle $V$ on $G/P$ is trivial if and only if the restriction of $V$ to every Schubert line is trivial.
\end{thm}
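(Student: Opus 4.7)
The forward direction is immediate. For the converse, assume $V|_L$ is trivial for every Schubert line $L \subset G/P$. My plan is to apply Theorem 1.1: since $G/P$ is rational and hence separably rationally connected, it suffices to show that $V|_C$ is trivial for every rational curve $C \subset G/P$.

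Fix $C$ realized by $f : \mathbb{P}^1 \to G/P$ of class $\beta$. Since the Mori cone of $G/P$ is generated by classes of Schubert lines, one can write $\beta = \sum_i n_i [L_i]$ with $n_i \geq 0$. Pairing with $c_1(V)$ and using the hypothesis gives $\deg V|_C = \sum_i n_i \deg V|_{L_i} = 0$, hence $V|_C \cong \bigoplus_j \mathcal{O}_{\mathbb{P}^1}(a_j)$ with $\sum_j a_j = 0$; the task is to force every $a_j$ to vanish.

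The heart of the proof, to be carried out in Section 3, is the irreducibility of a moduli space of rational chains on $G/P$ of total class $\beta$; equivalently, the irreducibility of the Kontsevich space $\overline{\mathcal{M}}_{0,0}(G/P, \beta)$. Granting this, its boundary contains a chain $D = L_1 \cup \cdots \cup L_m$ of Schubert lines with $\sum_i [L_i] = \beta$. On such a chain, the hypothesis that each $V|_{L_i}$ is trivial lets one glue the trivializations across the nodes by adjusting with constant elements of $\mathrm{GL}_r$, producing $V|_D \cong \mathcal{O}_D^{\oplus r}$. To transfer triviality from $D$ to $C$ I would use the endomorphism bundle: for any rank-$r$ degree-zero vector bundle $W$ on a connected arithmetic-genus-zero rational curve, Riemann--Roch gives $\chi(\operatorname{End}(W)) = r^2$ and hence $h^0(\operatorname{End}(W)) \geq r^2$, with equality if and only if $W$ is trivial. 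Pulling $V$ back to the universal family over a one-parameter deformation inside the moduli that has $D$ as one fiber and $C$ as another, upper semi-continuity of $h^0(\operatorname{End}(\cdot))$ combined with triviality on $D$ forces $h^0 \leq r^2$ on the $C$-fiber; the Riemann--Roch lower bound gives equality, so $V|_C$ is trivial. Theorem 1.1 then concludes.

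The main obstacle is the irreducibility of the moduli of chains of Schubert lines on $G/P$. I intend to prove this by induction on the chain length $m$: the variety of Schubert lines through a fixed point of $G/P$ is homogeneous under the stabilizer in $G$ and therefore irreducible, and the forgetful map from length-$(m+1)$ chains to length-$m$ chains has such irreducible fibers, so homogeneity and the Bruhat decomposition organize the induction. A secondary subtle point is arranging the one-parameter deformation so that upper semi-continuity yields triviality on the specific fiber corresponding to $C$ rather than only on a generic curve in the moduli; triviality is an open but not automatically closed condition, and this will require choosing the family with care inside the irreducible moduli.
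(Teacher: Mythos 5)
Your strategy---reduce to Theorem 2.6 by proving $V$ is trivial on \emph{every} rational curve, via degeneration of a given curve $C$ to a chain of Schubert lines $D$ inside an irreducible moduli space---has a genuine gap at the degeneration step, and it is not the ``secondary subtle point'' you flag at the end but the decisive one. Upper semicontinuity of $h^0(\operatorname{End}(\cdot))$ on the fibers of the universal family, combined with the Riemann--Roch lower bound $h^0\geq r^2$, gives $h^0=r^2$ only on an \emph{open neighborhood} of the point $[D]$, hence on a dense open subset of the irreducible moduli space; it says nothing about the particular fiber $[C]$ you started with, which may lie in the closed jumping locus. Fiberwise triviality is an open condition, and no choice of one-parameter family through $[D]$ and $[C]$ can promote it to the point $[C]$: every such pencil only yields triviality near $[D]$. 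So your argument shows that a \emph{general} rational curve of class $\beta$ has trivial restriction, whereas Theorem 2.6 requires every rational curve. A lesser but real further issue is that you invoke irreducibility of $\overline{\mathcal{M}}_{0,0}(G/P,\beta)$ for an arbitrary effective class $\beta$ (a substantial theorem of Kim--Pandharipande and Thomsen), which is far more than the irreducibility of the chain spaces that Section 3 actually establishes.

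The paper avoids the problem by never attempting to control $V$ on arbitrary rational curves. It constructs the space $Chn_2(X,i_1,\ldots,i_m)$ of two-pointed chains of Schubert lines, shows (Lemmas 3.2 and 3.3) that for suitable $m$ the evaluation map to $X\times X$ is surjective with irreducible fibers, fixes the first marked point at $p$, and applies the push-forward criterion of Lemma 2.2 to the resulting family $B\leftarrow C\rightarrow X$: the fibers of $\pi$ are chains of Schubert lines, on which triviality of $f^*V$ is immediate from the hypothesis (a bundle on a tree of rational curves is trivial if and only if it is trivial on each component), and Stein factorization, cohomology and base change, and the projection formula then give $V\cong\mathcal{O}_X^{\oplus n}$ directly. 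If you wish to keep the route through Theorem 2.6, you would first have to establish the theorem's conclusion by some such direct argument anyway, which makes the detour circular.
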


Use this triviality criterion and the theory of Chern Classes, we give a splitting criterion for uniform vector bundles of low rank on homogenous spaces.

\begin{thm}
Let $G/P$ be a homogenous space of Picard number one. If its $VRMT$ has good divisibility up to degree $r$, then a uniform vector bundle $V$ on $G/P$ of rank at most $r$ is splitting.
\end{thm}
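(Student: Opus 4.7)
The plan is to combine the uniformity hypothesis with the triviality criterion of Theorem~2 through a relative Harder--Narasimhan filtration, using the good divisibility of the VRMT to ensure that the filtration is by subbundles with Chern classes of the correct form.

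First I would fix a Schubert line $\ell \subset G/P$ and, using uniformity, write $V|_\ell \cong \bigoplus_{i=1}^{k} \OO_\ell(a_i)^{\oplus r_i}$ with $a_1 > a_2 > \cdots > a_k$, $\sum r_i = \operatorname{rk}(V) \le r$. Twisting $V$ by a suitable power of the ample generator of $\operatorname{Pic}(G/P)$, I may assume $a_k = 0$. The universal family of Schubert lines $p: \U \to \mathcal{M}$, $q: \U \to G/P$ has rationally connected (in fact, homogenous) base, and for each $x \in G/P$, the VRMT $\C_x$ is the fiber of the restricted family through $x$. Pulling back via $q$, the bundle $q^*V$ on $\U$ has the same splitting type on every $p$-fiber, so the relative Harder--Narasimhan filtration
\[
0 = W^0 \subset W^1 \subset \cdots \subset W^k = q^*V
\]
is a filtration by subbundles whose graded pieces are fiberwise isomorphic to $\OO(a_i)^{\oplus r_i}$ along each line.

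Next I would show that the $W^i$ descend to a filtration of $V$ by subbundles on $G/P$. The strategy is to compute the Chern classes of $W^i$ inside $\ch^*(\U)$ and use the good divisibility condition on $\C_x \cong p^{-1}(x)$ to show that these Chern classes lie in the image of $q^*$. Concretely, for each $x$, the restriction of $W^i$ to $\C_x$ must have Chern classes in $\ch^{\le r}(\C_x)$ satisfying multiplicative relations forced by the fiberwise trivialization of the subquotients; good divisibility up to degree $r$ forces these classes to be pullbacks from a point, hence $W^i|_{\C_x}$ has trivial Chern classes and, being a direct sum on each Schubert line in the homogenous $\C_x$, Theorem~2 applied to $\C_x$ forces $W^i|_{\C_x}$ to be trivial. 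This gives a subbundle of $V$ on $G/P$ whose pullback via $q$ is $W^i$.

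Having obtained a global filtration $0 \subset V^1 \subset \cdots \subset V^k = V$ on $G/P$, each graded piece $V^i/V^{i-1}$ is by construction trivial after restriction to any Schubert line once twisted by $\OO_{G/P}(-a_i)$, so Theorem~2 gives $V^i/V^{i-1} \cong \OO_{G/P}(a_i)^{\oplus r_i}$. Finally I would split the extensions: by induction it suffices to show $\operatorname{Ext}^1(\OO(a_i)^{\oplus r_i}, \OO(a_j)^{\oplus r_j}) = H^1(G/P, \OO(a_j - a_i))^{\oplus r_ir_j} = 0$ for $j < i$, which follows from Kodaira/Kempf vanishing on the rational homogenous space $G/P$ since $a_j - a_i < 0$. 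The main obstacle is the descent step: controlling the relative HN subbundles $W^i$ globally on $\U$ and checking that their restrictions to each VRMT fiber have trivial Chern classes — this is precisely where the hypothesis ``good divisibility up to degree $r$'' and the rank bound $\operatorname{rk}(V) \le r$ are essential, because they bound the degrees of the Chern classes in play and prevent nontrivial cohomological obstructions from appearing on $\C_x$.
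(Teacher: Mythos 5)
Your overall strategy coincides with the paper's: both produce, on the universal family of lines, a filtration of the pulled-back bundle by subbundles reflecting the uniform splitting type, descend it to $G/P$ by proving triviality on the VMRT fibers via a Chern-class computation that uses good divisibility, identify the graded pieces with direct sums of line bundles via the Schubert-line triviality criterion, and finally split the extensions by a cohomology vanishing on $G/P$. (The paper peels off only the degree-zero part $p^*p_*ev^*V$ in a two-step sequence and then inducts on the rank, whereas you take the full relative Harder--Narasimhan filtration at once; that difference is organizational, not substantive.)

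There is, however, a genuine gap in your descent step. You conclude that $W^i$ restricted to a VMRT fiber $\mathcal{C}_x=q^{-1}(x)$ is trivial from ``trivial Chern classes'' together with ``Theorem 2 applied to the homogeneous $\mathcal{C}_x$.'' This does not work as stated: (i) the VMRT of a homogeneous space of Picard number one need not itself be a homogeneous space to which the Schubert-line criterion applies --- for the symplectic Grassmannian $\mathbb{SG}(k,2m-1)$ it is the projective bundle $\PP_{\PP^m}\bigl(\OO(2)\oplus\OO(1)^{\oplus 2m-2k-2}\bigr)$ --- and (ii) vanishing Chern classes do not by themselves imply triviality, nor have you shown that $W^i$ restricted to a rational curve inside $\mathcal{C}_x$ is a direct sum of $\OO$'s (on $\PP^1$, $c_1=0$ is compatible with $\OO(1)\oplus\OO(-1)$). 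The missing ingredient --- and it is exactly where the paper's proof does its real work --- is that $W^i|_{\mathcal{C}_x}$ is a subbundle with locally free quotient of the \emph{trivial} bundle $q^*V|_{\mathcal{C}_x}\cong V_x\otimes\OO_{\mathcal{C}_x}$, so that $(W^i|_{\mathcal{C}_x})^\vee$ and the quotient $(q^*V/W^i)|_{\mathcal{C}_x}$ are globally generated; a globally generated bundle with vanishing first Chern class on a projective variety is trivial, and the good-divisibility argument is what kills $c_1$. Once this is inserted (and after correcting the sign slip at the end: for $j<i$ you have $a_j-a_i>0$, so the vanishing you need is $H^1(G/P,L)=0$ for $L$ ample, which Kodaira vanishing on the Fano variety $G/P$ does provide), your argument closes up and is essentially the paper's.
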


As an application, we apply this theorem to classical Grassimannians and quardrics in Section $5$.
\begin{thm}
A uniform vector bundle on a classical Grassimannian or quardric $X$ of rank at most $s(X)$ is splitting, for the corresponding $X$ and $s(X)$, see Theorem \ref{example}.
\end{thm}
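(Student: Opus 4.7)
The plan is to deduce the statement directly from Theorem 3 by verifying, family by family, that the variety of minimal rational tangents (VRMT) of each listed space has good divisibility up to the claimed degree $s(X)$. Since Theorem 3 already reduces splitting of a low-rank uniform bundle on a Picard-number-one homogenous space to a purely cohomological property of its VRMT, the work here is to identify that VRMT and check the divisibility in its cohomology ring.

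First, I would collect the standard descriptions of the VRMT at a general point of each space. For the type $A$ Grassmannian $G(k,n)$ the VRMT is the Segre variety $\PP^{k-1}\times\PP^{n-k-1}$ embedded in $\PP^{k(n-k)-1}$; for the smooth quadric $Q_{m}$ the VRMT is the lower-dimensional quadric $Q_{m-2}$; and for the symplectic and orthogonal Grassmannians it admits an analogous explicit description via the Landsberg--Manivel calculation. Each of these descriptions is classical and can be cited rather than re-derived.

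Second, I would verify the good divisibility property directly in the cohomology of each VRMT. For the Segre factor $\PP^{a}\times\PP^{b}$ the cohomology ring is $\mathbb{Z}[h_{1},h_{2}]/(h_{1}^{a+1},h_{2}^{b+1})$, so a product of two positive-degree homogeneous classes is forced to be nonzero as long as the total degree stays below $\min(a,b)+1$; this pins down the numerical value of $s(X)$ for Grassmannians. For quadrics one uses the presentation of $H^{\ast}(Q_{m})$ generated by the hyperplane class together with the two middle-dimensional spinor classes in the even case, and good divisibility persists up to roughly $\lfloor m/2\rfloor$. Assembling these computations produces the explicit $s(X)$ recorded in Theorem \ref{example}, and plugging the resulting divisibility statement into Theorem 3 yields splitting for any uniform bundle of rank at most $s(X)$.

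The main obstacle is the case analysis on the VRMT side. In type $A$ the Segre computation is transparent, but the isotropic Grassmannians and the middle-dimensional cohomology of an even quadric force one to rule out Pieri-type cancellations among Schubert classes; it is in bookkeeping these Schubert relations carefully enough to extract the sharp value of $s(X)$, rather than a suboptimal one, that the substantive work lies.
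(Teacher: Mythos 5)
Your proposal follows essentially the same route as the paper: identify the VRMT of each space from the Landsberg--Manivel classification, verify good divisibility of its Chow/cohomology ring case by case (projective spaces, Segre products, quadrics, and projective bundles over projective space for the symplectic Grassmannians), and feed the result into the splitting criterion of Theorem \ref{split}. This is exactly the paper's argument, which packages the divisibility checks into Lemmas \ref{pp}, \ref{div}, \ref{q1}, \ref{q2}, and \ref{vb}.
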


  \textbf{Acknowledgments.} The author is very grateful for his advisor Prof.~A.~de Jong, without the encouragement of his advisor, the author would not write down this paper. The author also thanks his friends Xue Hang , Dr.~Yang Yanhong, Dr.~Yi Zhu and Dr.~Zhiyu Tian for useful conversations.

\section{Triviality of Vector Bundles and Families of Rational Curves}

We start from a general triviality criterion for a vector bundle on an arbitrary smooth projective variety.

\begin{prop}
Let $X$ be a smooth projective variety over an algebraically closed field. A vector bundle $E$ on $X$ is trivial if and only if the restriction of $E$ to every curve $C\subseteq X$ is trivial.

\end{prop}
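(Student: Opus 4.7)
The forward direction is immediate, so the substance is in the converse. My plan is to set $r = \mathrm{rank}(E)$, $n = \dim X$, and produce $r$ global sections of $E$ that are everywhere pointwise linearly independent. I will obtain these sections by lifting a trivializing basis from a single smooth complete-intersection curve $C \subset X$ on which $E$ is trivial by hypothesis; the resulting sections will then define the desired isomorphism $\OO_X^{\oplus r} \cong E$. The case $n = 1$ is tautological, so I assume $n \ge 2$.

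Fix a very ample line bundle $L$ on $X$. For $m \gg 0$ take general divisors $D_1, \dots, D_{n-1} \in |L^{\otimes m}|$ and set $Y_0 := X$, $Y_i := D_1 \cap \dots \cap D_i$, so that by iterated Bertini each $Y_i$ is smooth irreducible and $C := Y_{n-1}$ is a smooth curve. For each $0 \le i \le n-2$ the restriction sequence
\[
0 \longrightarrow E|_{Y_i} \otimes L^{-m} \longrightarrow E|_{Y_i} \longrightarrow E|_{Y_{i+1}} \longrightarrow 0
\]
shows that the restriction map on $H^0$ is surjective provided $H^1(Y_i, E|_{Y_i} \otimes L^{-m}) = 0$. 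Since $\dim Y_i \ge 2$ on this range, Serre duality rewrites this group as $H^{\dim Y_i - 1}(Y_i, E^\vee|_{Y_i} \otimes \omega_{Y_i} \otimes L^m)^\vee$, which vanishes for $m$ sufficiently large by Serre vanishing. Composing these surjections produces $H^0(X, E) \twoheadrightarrow H^0(C, E|_C)$, and I lift a basis of the latter to sections $s_1, \dots, s_r \in H^0(X, E)$.

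To conclude, form $\sigma := s_1 \wedge \cdots \wedge s_r \in H^0(X, \det E)$. By construction $\sigma|_C$ is a nowhere-vanishing constant section of $\det E|_C \cong \OO_C$, so the zero scheme $Z(\sigma) \subset X$ is either empty or an effective divisor disjoint from $C$. But $[C]$ is numerically a positive multiple of $L^{n-1}$, so $C$ meets every nonzero effective divisor on $X$; hence $Z(\sigma) = \emptyset$. The sections $s_1, \dots, s_r$ are therefore pointwise linearly independent on $X$ and define the desired trivialization.

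The only real technical work is calibrating $m$ so that all of the groups $H^1(Y_i, E|_{Y_i} \otimes L^{-m})$ vanish simultaneously, which is a standard Castelnuovo--Mumford regularity / Serre vanishing estimate and is the sole analytic input of the argument. Everything after the lifting step is the immediate intersection-theoretic finish with $\sigma$ described above.
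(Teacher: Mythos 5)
Your argument is correct in outline but follows a genuinely different route from the paper. The paper proves the converse by induction on $\dim X$ using a Lefschetz pencil: it blows up the base locus to get $\pi:\widetilde{X}\to\PP^1$, applies the inductive hypothesis to the fibers (which are hyperplane sections, so every curve in them is a curve in $X$), deduces via cohomology and base change that $f^*E=\pi^*W$ for a bundle $W$ on $\PP^1$, and then kills the twists of $W$ by testing against a multisection. You instead work with a single general complete intersection curve $C$ of high degree: you lift a trivializing basis of $H^0(C,E|_C)$ to global sections of $E$ using vanishing of intermediate cohomology along the chain $X\supset Y_1\supset\dots\supset C$, and conclude with the determinant section $\sigma=s_1\wedge\cdots\wedge s_r$ and the observation that an effective divisor in $|\det E|$ would have positive degree against $L^{n-1}$ while $\deg(\det E|_C)=0$. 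This is a clean and more economical argument, and it actually establishes a stronger statement than the proposition: triviality of $E$ on \emph{one} sufficiently general, sufficiently positive complete intersection curve already forces $E$ to be trivial. The paper's induction, by contrast, genuinely consumes the hypothesis on many curves (all curves inside iterated hyperplane sections, plus the multisection of the pencil).

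One step needs more care than you give it. You justify $H^1(Y_i,E|_{Y_i}\otimes L^{-m})=0$ by Serre duality and Serre vanishing \emph{on $Y_i$}, but $Y_i$ is itself cut out by divisors in $|L^{\otimes m}|$, so both the variety and the sheaf $E^\vee|_{Y_i}\otimes\omega_{Y_i}$ vary with $m$; the threshold "$m$ sufficiently large" provided by Serre vanishing on $Y_i$ therefore depends on $m$, which is circular as stated. The fix is to compute on the fixed variety $X$: the Koszul resolution of $\OO_{Y_i}$ by the regular sequence $D_1,\dots,D_i$, tensored with $E\otimes L^{-m}$, reduces the required vanishing to $H^{1+j}(X,E\otimes L^{-(j+1)m})=0$ for $0\le j\le i\le n-2$, and these hold for $m\gg 0$ by Serre duality and Serre vanishing on $X$ alone (the relevant dual groups are $H^{n-1-j}$ with $n-1-j\ge 1$). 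With that substitution the "calibration of $m$" you defer is genuinely uniform and the proof is complete. A last small remark: since the field may have positive characteristic, you should note that Bertini smoothness for general members of the very ample system $|L^{\otimes m}|$ does hold in all characteristics for embedded smooth varieties (the standard counterexamples involve non-embedding linear systems), so your construction of the smooth chain $Y_i$ is legitimate.
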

\begin{proof}
We prove this proposition by induction on the dimension. For $dim(X)=1$, it is trivial. By the existence of Lefschetz pencil, see \cite{ET}, we always have the following blow-up diagram,
\[\xymatrix{\PP^1 &\widetilde{X}\ar[r]^f\ar[l]_{\pi}& X\\
}\]
where $f$ is a blowing up map along a smooth projective subvariety in $X$ of codimension $2$, the map $\pi$ is a Lefschetz pencil. Therefore, we have $f_*\OO_{\widetilde{X}}=\OO_X$. By induction on dimension, the restriction $f^*V|_{\widetilde{X_s}}$ to a smooth fiber $\widetilde{X_s}$ is trivial where $s\in \PP^1$. Suppose $\widetilde{X}_s$ is a singular fiber with one double singularity. We blow up the singularity and get a rational resolution \[g: \widetilde{X}_s^{Bl}\rightarrow \widetilde{X}_s\] with $g_*(\OO_{\widetilde{X}^{Bl}_s})=\OO_{\widetilde{X}_s}$. By induction, we have that \[g^*(f^*V|_{\widetilde{X}_s})=\OO_{\widetilde{X}^{Bl}_s}^{\oplus n}.\]
where $n$ is the rank of $E$. By the projection formula, it implies that \[f^*V|_{\widetilde{X}_s}=g_*g^*(f^*V|_{\widetilde{X}_s})=\OO_{\widetilde{X_s}}^{\oplus n}\]
Since $\pi$ is flat, there exists a vector bundle $W$ on $\PP^1$ such that $\pi^* W=f^* V$ by base change theorem. We can choose a smooth projective curve $C$ such that
\[\xymatrix{C\ar[r]^h \ar[dr]_p &\widetilde{X} \ar[d]^{\pi}\ar[r]^f & X\\
&\PP^1}
\]
where $p$ is a finite and flat map of degree $N$. Since every vector bundle on $\PP^1$ is splitting, we can assume $W=\oplus_{i=1}^n\OO_{\PP^1}(a_i)$ where $a_i\in \mathbb{Z}$. By the hypothesis, we have \[p^*W\simeq h^*(f^*V)\simeq \OO_C^{\oplus n} \]Suppose there is $a_i$ which is positive, we have a exact sequence
\[0\rightarrow p^*\OO_{\PP^1}(a_i)\rightarrow p^* W \simeq \OO_C^{\oplus n},\]
since the degree of $p^*\OO_{\PP^1}(a_i)$ is $N\cdot a_i$ which is positive, the map between $p^*\OO_{\PP^1}(a_i)$ and $p^*W \simeq \OO_C^{\oplus n}$ has to be $0$. It is absurd. Therefore, all the $a_i$ are non-positive. Since we have
\begin{center}
    the degree of $c_1(p^*W)$ is $N\cdot (\sum_{i=1}^{n} a_i)$ and $c_1(p^*W)=0$,
\end{center}
where is $c_1(p^*W)$ the first Chern class of $p^*W$, it concludes that $\sum_{i=1}^{n} a_i=0$, hence, $a_i=0$, i.e, the vector bundle $W$ is trivial. We have\[V=f_*f^*V=f_*\pi^*W=f_*\OO_{\widetilde{X}}^{\oplus n}=\OO_X^{\oplus n}\]
\end{proof}

For a separable rationally connected variety, we can choose the test curves to be rational curves.

\begin{lemm} \label{lemma0}
Let $X$ be a normal scheme and $B$, $C$ be Artin stacks. Suppose we have the following diagram,
\[\xymatrix{
B\ar@/^1pc/[r]^{\sigma} & C  \ar[l]_{\pi} \ar[r]^f & X\\
}.\]
We have the following hypothesis,
\begin{enumerate}
  \item the morphism $\pi$ is representable and flat with fibers of arithmetic genus $0$,
  \item $\sigma$ is a section of $\pi$ and the image of $f \circ \sigma$ is a point,
  \item the morphism $f$ is proper and its geometric general fibers are nonempty, irreducible and generic reduced.
  \item $C$ is irreducible, reduced and contains an open substack as a scheme.
\end{enumerate}
If the restrictions $f^*V|_{C_s}$ are trivial on fibers $C_s$ of $\pi$, then the vector bundle $V$ is trivial.

\end{lemm}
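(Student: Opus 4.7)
The plan is to use $\pi$ together with its section $\sigma$ to show that $f^{*}V$ is already trivial on $C$, and then to push this triviality forward along $f$, using the normality of $X$.

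First, because $\pi$ is representable, flat and proper with geometric fibers of arithmetic genus zero, and because $f^{*}V$ restricts to the trivial bundle on each fiber $C_{s}$, we have $H^{0}(C_{s},f^{*}V|_{C_{s}})=k(s)^{n}$ and $H^{1}(C_{s},f^{*}V|_{C_{s}})=0$, where $n=\mathrm{rk}(V)$. Cohomology and base change then gives $W:=\pi_{*}(f^{*}V)$ locally free of rank $n$ on $B$, with the adjunction map $\pi^{*}W\to f^{*}V$ an isomorphism on every fiber, hence an isomorphism of rank-$n$ vector bundles on $C$. Pulling this back along $\sigma$ and using $\pi\sigma=\mathrm{id}_{B}$ yields $W\cong (f\sigma)^{*}V$; since $f\sigma$ factors through a closed point $x_{0}\in X$, the pullback $(f\sigma)^{*}V$ is a trivial bundle on $B$. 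Thus $W\cong \OO_{B}^{\oplus n}$, and therefore $f^{*}V\cong \OO_{C}^{\oplus n}$.

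Next, I would descend the triviality to $X$ by showing $f_{*}\OO_{C}=\OO_{X}$ via a Stein factorization. Since $f$ is proper, $f_{*}\OO_{C}$ is a coherent $\OO_{X}$-algebra, and $f$ factors as $C\xrightarrow{h}Y:=\mathbf{Spec}_{X}(f_{*}\OO_{C})\xrightarrow{g} X$ with $g$ finite. The hypothesis that the geometric general fiber of $f$ is nonempty, irreducible and generically reduced forces $g$ to be birational, and normality of $X$ then forces $g$ to be an isomorphism, so $f_{*}\OO_{C}=\OO_{X}$. The projection formula now yields
\[
V = f_{*}f^{*}V \cong f_{*}(\OO_{C}^{\oplus n}) = (f_{*}\OO_{C})^{\oplus n} = \OO_{X}^{\oplus n},
\]
completing the argument.

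The main obstacle is stack-theoretic: $B$ and $C$ are Artin stacks rather than schemes. Representability of $\pi$ keeps the first half of the argument essentially classical, but the Stein-factorization half is more delicate, since one must confirm that $f_{*}\OO_{C}$ is a coherent sheaf and that its generic stalk is the residue field at the generic point of $X$. This is where hypothesis (4), that $C$ contains an open substack which is a scheme, is most naturally used: it allows one to verify the relevant generic properties of $f$ on a scheme-theoretic open of $C$ and then extend the resulting equality of coherent sheaves across all of $X$ using normality.
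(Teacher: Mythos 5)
Your proposal is correct and follows essentially the same route as the paper: cohomology and base change along the genus-zero fibration $\pi$ plus the contracted section $\sigma$ to trivialize $f^{*}V$ on $C$, then Stein factorization together with the irreducible, generically reduced geometric generic fiber and the normality of $X$ to get $f_{*}\OO_{C}=\OO_{X}$, and finally the projection formula. The only difference is the order of the two halves (the paper establishes $f_{*}\OO_{C}=\OO_{X}$ first), which is immaterial.
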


\begin{proof}
 By the Stein factorization for Artin stacks \cite{STACK} or \cite{OM}, we have \[\xymatrix{C\ar[r]_h\ar@/^1pc/[rr]^f &Z\ar[r]_g &X}\]
where $g:Z\rightarrow X$ is a finite morphism between two integral schemes and $h:C\rightarrow Z$ satisfies $h_*{\OO_C}=\OO_Z$. We claim that $f_*\OO_{C}=\OO_X$. Since $X$ is normal and the hypothesis $(4)$, to prove the claim is sufficient to prove $Z$ and $X$ are birational via $g$. By the hypothesis $(3)$ and $(4)$, the geometrical generic fiber $C_{\overline{K}}$ is irreducible and generic reduced where $K$ is the function field of $X$. We have the following cartesian diagram
\[\xymatrix{C_{\overline{K}}\ar[r]\ar[d]\ar@{}[dr]|-{\Box} & L\otimes \overline{K}\ar@{}[dr]|-{\Box} \ar[d]\ar[r] &\overline{K}\ar[d]\\
C\ar[r]_h & Z\ar[r]_g &X}\]
where $L$ is function field of $Z$ and $C_{\overline{K}}$ is flat over $L\otimes \overline{K}$. If $L$ is a non trivial algebraic extension of $K$, then $C_{\overline{K}}$ can not be irreducible or generic reduced, it is a contradiction with condition $(3)$. Therefore, the map $g$ is birational. The claim is clear. Since $f^*V|_{C_s}$ are trivial by the hypothesis $(1)$( i.e, the fibers $C_s$ is of arithmetic genus $0$ and the representability of $\pi$), we have the following properties by base change theorem, see \cite{STACK},
\begin{enumerate}
  \item $\pi^*\pi_*(f^*V)\rightarrow f^* V$ is an isomorphism,
  \item $\pi_*(f^*V)$ is a vector bundle on $B$.
\end{enumerate}

In other words, there exists a vector bundle $W$ on $B$ such that $\pi^*W=f^*V$.
By the hypothesis $(2)$, we have that $W=\sigma^*\pi^*W=\sigma^*f^*V=E$ where $E$ is a trivial bundle since $f\circ \sigma$ is a point.
It implies that $f^*V=\pi^*W$ is a trivial bundle $\OO_C^{\oplus n}$. By the projection formula, see \cite{STACK}, we have \[V=V\otimes f_*\OO_C=f_*(f^*V)=(f_*\OO_C)^{\oplus n}=\OO_X^{\oplus n}\]
\end{proof}

A variety $X$ is separably rationally connected if it admits a very free rational curves, see \cite{KO1} for the definition. For a separably rationally connected projective varieties, we have families of rational curves which satisfies the Lefschetz condition. We also have very flexibility to choose these families, see \cite[Remark 4]{KO2} and \cite[Section 8]{KO}.

\begin{lemm}\label{lemma1} \cite[Theorem 8.10]{KO}\cite[Theorem 3]{KO2}
For a separably rationally connected variety $X$ over an algebraically closed field $k$, there is a family of very free rational curves
\[\xymatrix{
D\ar@/^1pc/[r]^-{\sigma}  & D\times \PP^1 \ar[l]^-{\pi}    \ar[r]_-f & X
}.\]
such that \begin{enumerate}
            \item $f\circ \sigma$ is a point $x$,
            \item the general fibers of $f$ are reduced and irreducible,

          \end{enumerate}
\end{lemm}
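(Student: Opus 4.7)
The plan is to take $D$ to be a suitably enlarged irreducible component of the moduli space of pointed very free morphisms $\PP^1 \to X$ sending $0$ to $x$, and to argue that the evaluation map from its universal family has reduced and irreducible generic fiber. Since $X$ is separably rationally connected, a general $x \in X$ lies on some very free rational curve, and after reparametrization we may take $g_0 \colon \PP^1 \to X$ with $g_0(0)=x$. Let $D_0$ denote the irreducible component containing $[g_0]$ of $\mathrm{Mor}(\PP^1, X;\, 0\mapsto x)$. Very freeness of $g_0$ implies $g_0^* T_X(-0)$ is globally generated, so $D_0$ is smooth at $[g_0]$ with tangent space $H^0(\PP^1, g_0^* T_X(-0))$. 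With $\pi \colon D_0 \times \PP^1 \to D_0$ the projection, $\sigma(d) = (d,0)$, and $f(d,p) = d(p)$, condition (1) is immediate: $f \circ \sigma$ is the constant map with image $x$.

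For reducedness of general fibers of $f$ in condition (2), I would compute the differential of $f$ at a point $(g,p)$ with $p \ne 0$. It is the sum of the evaluation $H^0(\PP^1, g^* T_X(-0)) \to g^* T_X|_p \cong T_{g(p)} X$ and the differential of $g$ at $p$. On a dense open $U \subseteq D_0 \times \PP^1$, global generation of $g^* T_X(-0)$ persists (openness combined with very freeness of $g_0$), so $f|_U$ is smooth, and hence general fibers of $f$ are smooth, in particular reduced.

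The main obstacle is irreducibility of general fibers of $f$, which need not hold for $D_0$ as initially chosen; it is here that the substantive input from \cite{KO} enters. To force irreducibility I would apply the comb-smoothing technique of Koll\'ar--Miyaoka--Mori: attach very free ``teeth'' $t_i \colon \PP^1 \to X$ at distinct general marked points $p_i \in \PP^1$ (with $t_i(0) = g_0(p_i)$) to form a genus-$0$ comb, and smooth it to obtain a morphism $\PP^1 \to X$ still sending $0 \mapsto x$. The space $D$ of such smoothings is irreducible and strictly contains $D_0$. For sufficiently many teeth in sufficiently general position, the analysis of \cite[Section 8]{KO} shows that for a general $y \in X$ the space of morphisms in $D$ sending $0$ to $x$ and some other point to $y$ is irreducible, which is precisely irreducibility of the generic fiber of $f \colon D \times \PP^1 \to X$. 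Isolating the correct family of combs and verifying irreducibility of the two-pointed moduli is the technical core of the argument and its chief difficulty.
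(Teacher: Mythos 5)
Your proposal is correct and follows essentially the same route as the paper: both take $D$ to be (a piece of) the irreducible family of very free curves through $x$ produced by the Koll\'ar comb-attaching-and-smoothing construction of \cite[Section 8]{KO}, deduce reducedness of general fibers from smoothness of the evaluation map on the locus of very free curves, and defer the irreducibility of the general fiber of $f$ to the Lefschetz-condition analysis in \cite{KO} and \cite{KO2}. The paper's proof is likewise a pointer to those references (choosing $D=\overline{D}_{s\mapsto x}$ and citing Debarre's smoothness criterion for the evaluation map), so your sketch sits at the same level of detail and identifies the same technical core.
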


\begin{proof}
 For the irreducibility of the general fibers of $f$, it follows from the existence of a family of rational curves \[\xymatrix{\overline{D}\ar@/^1pc/[r]^{s}&C_{\overline{D}}\ar[l]\ar[r]& X }\] satisfies the Lefschetz condition. In the terminology of \cite[Section 8]{KO}, we pick $D=\overline{D}_{s\mapsto x}$. The reason for the general fibers to be reduced is due to the construction of $\overline{D}$ only involved with gluing free, very free curves and smoothing them, in fact, we can choose such $\overline{D}$ to be an open subvariety of $Hom^{vf}(\PP^1,X)$ which is parameterizing very free curves, therefore, the fibers of $f$ are smooth by \cite[Proposition 4.8]{Debar}. See the proof of \cite[Theorem 3]{KO2} for the details.
\end{proof}

\begin{remark}
 \[\]
\begin{enumerate}
    \item In \cite{KO}, we call such a family $D$ is coming from a family $\overline{D}$ of rational curves which satisfies the Lefschetz condition.
    \item For $char(k)=0$, we know separably rational connectedness is equivalent to rational connectedness, hence, a complete Fano manifold is separably rationally connected. For $char(k)=p$, the rational connectedness can not imply the separably rational connectedness, it is subtle. We even do not know whether any Fano hypersurface is S.R.C.
        \item  For $char(k)=p$, we know is that a general Fano hypersurface is separably rationally connected \cite{Zhu1}, and all the smooth cubic hypersurfaces are separably rationally connected \cite{KO}.

\end{enumerate}

\end{remark}

The following theorem is due to \cite{VBR}, we give a simple proof of this beautiful theorem.
\begin{thm} \label{SRCV}
Let $X$ be a smooth projective variety over an algebraically closed field $k$. If $X$ is separably rationally connected, then a vector bundle $V$ on $X$ is trivial if and only if the restriction of $V$ to every rational curves on $X$ is trivial.
\end{thm}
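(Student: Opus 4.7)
The ``only if'' direction is immediate, so I focus on the converse. The idea is to combine Lemma~\ref{lemma0} with the family of very free rational curves provided by Lemma~\ref{lemma1}.

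First, I would apply Lemma~\ref{lemma1} to the separably rationally connected variety $X$ to produce a family $D \xleftarrow{\pi} D \times \PP^1 \xrightarrow{f} X$ together with a section $\sigma$ such that $f \circ \sigma$ is the constant map to a point $x \in X$ and the general fibers of $f$ are reduced and irreducible. I then plug this data into Lemma~\ref{lemma0} with $B = D$ and $C = D \times \PP^1$.

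The next step is to verify the four hypotheses of Lemma~\ref{lemma0}. Hypothesis~(1) is automatic, since $\pi$ is the projection from a product, hence flat with $\PP^1$-fibers of arithmetic genus zero. Hypothesis~(2) is the first assertion of Lemma~\ref{lemma1}. Hypothesis~(4) is trivial because $D \times \PP^1$ is an irreducible reduced scheme. For hypothesis~(3), irreducibility and generic reducedness of the general fibers are provided by Lemma~\ref{lemma1}, while non-emptiness follows from the fact that very free rational curves through $x$ dominate $X$. Once the hypotheses are in place, Lemma~\ref{lemma0} reduces the problem to triviality of $f^*V$ on each $\pi$-fiber $C_s = \{s\} \times \PP^1$; but the restriction of $f$ to $C_s$ is precisely the rational curve in $X$ parameterized by $s$, so $f^*V|_{C_s}$ is the pullback of $V$ along a rational curve, hence trivial by assumption.

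The main obstacle I anticipate is the properness of $f$ required by hypothesis~(3): Koll\'ar's construction typically yields a quasi-projective $D$, so $f$ need not be proper as it stands. I would address this either by passing to a projective compactification $\overline{D}$ of $D$ and resolving the indeterminacy of the induced rational map $\overline{D} \times \PP^1 \dashrightarrow X$, or by verifying that the Stein-factorization step inside the proof of Lemma~\ref{lemma0} still goes through once $f$ is replaced by a suitable proper model with the same generic fiber over $X$. Apart from this technicality, the argument is essentially a direct assembly of the two preceding lemmas.
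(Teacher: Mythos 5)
Your overall strategy is exactly the paper's: feed the family of very free curves from Lemma~\ref{lemma1} into Lemma~\ref{lemma0}. You also correctly identify the one real obstacle, namely that $D$ is only quasi-projective, so $f\colon D\times\PP^1\to X$ is not proper and hypothesis~(3) of Lemma~\ref{lemma0} fails as stated. But you leave that obstacle unresolved, and resolving it is where the remaining content of the proof lives, so as written there is a gap.

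The paper closes it as follows: it views $D$ as mapping to the Kontsevich moduli stack $\M_{0,1}(X,e)$ (choosing $D$ to land in the locus of stable maps with trivial automorphisms, so that hypothesis~(4) of Lemma~\ref{lemma0} holds), and replaces $D$ by $B=\overline{Im(h)}$, the closure inside this \emph{proper} stack; the family applied to Lemma~\ref{lemma0} is then the restricted universal curve $\mathcal{C}|_B\to X$, which is automatically proper. The price of this compactification — and the step missing from your sketch — is that the fibers of $\pi$ over boundary points of $B$ are no longer copies of $\PP^1$ but stable maps with reducible domains, i.e.\ trees of rational curves. Since Lemma~\ref{lemma0} (via the base-change argument in its proof) needs $f^*V|_{C_s}$ trivial on \emph{every} fiber of $\pi$, not just the general one, you must observe that a vector bundle on a chain of rational curves is trivial as soon as its restriction to each component is trivial; combined with the hypothesis on all rational curves (and the fact that contracted components contribute trivially), this handles the boundary fibers. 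Your first proposed fix (compactify $\overline{D}$ and resolve indeterminacy of $\overline{D}\times\PP^1\dashrightarrow X$) would also produce reducible fibers and would need the same observation; your second fix (rerunning Stein factorization for a non-proper $f$) is too vague to assess. So: right route, but you need to (a) compactify inside a space whose boundary fibers you control, and (b) supply the chain-of-rational-curves triviality statement to verify the fiberwise hypothesis of Lemma~\ref{lemma0} on the boundary.
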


\begin{proof}
Let $M$ be $(\M_{0,1}(X,e))_{red}$ where $\M_{0,1}(X,e)$ is the Kontsevich moduli stack parameterizing stable maps satisfying\begin{enumerate}
                       \item the domain is of arithmetic genus zero,
                       \item the image is a degree $e$ curve in $X$ and the map has finite automorphisms,
                       \item the domain has one pointed point on the smooth locus.
                     \end{enumerate}Let $\mathcal{C}$ be the universal family $\M_{0,2}(X,e)$ of $\M_{0,1}(X,e)$. By Lemma \ref{lemma1}, we have a morphism $h$ induced the following cartesian diagram
\[\xymatrix{D\times \PP^1 \ar[r] \ar@{}[dr]|-{\Box} \ar[d]^{\pi} & \mathcal{C}_{M}\ar[d]\\
D\ar[r]^-h \ar@/^1pc/[u]^{\sigma}& M \ar@/_1pc/[u]_{\sigma_0}},\]
                     where we can choose $D$ to parameterize the maps with trivial automorphism by \cite[Remark 4]{KO2}. If $Char (k)=0$, then $\M_{0,m}(X,e)$ is a proper Deligne-Mumford stack, for arbitrary characteristic, it is a proper Artin stack with finite inertia group, it contains an open substack as a scheme which is parameterizing stable maps with trivial automorphism, see \cite{FP} and \cite{AK} for the details. Let $B$ be $\overline{Im (h)}$. It is obvious that a vector bundle on a chain of rational curves is trivial if and only if its restriction to every component of the chain is trivial. It is easy to check the family of rational curves
                     \[\xymatrix{ B\ar@/^1pc/[r]^{\sigma'} & \mathcal{C}|_B \ar[l] \ar[r] & X}\]
with a section $\sigma'$ satisfies all the hypotheses of Lemma \ref{lemma0}, hence it implies the theorem immediately.
\end{proof}

\section{Triviality of Vector Bundles on Homogenous Spaces}\label{s3}

From this section on, we assume k be an algebraically closed field of characteristic zero. Let $X=G/P$ be a projective homogeneous space under a semi-simple linear algebraic k-group $G$ with a stabilizer parabolic subgroup $P$. By the Bruhat decomposition, the Picard lattice of X is freely generated by the line bundles associated to the Schubert varieties of codimension one, denoted by $L_1,\ldots L_r$. Dually, the curve classes are generated by Schubert curves, denoted by $C_1,\ldots, C_r$.

\begin{defi}
For numbers $i_1,\ldots ,i_m \in \{1,\ldots r\}$, we define a moduli space of chains of Schubert curves with two pointed points and of the pattern $(i_1,\dots,i_m)$. More precisely,
\[Chn_2(X,i_1,\ldots,i_m)=\M_{0,2}(X,[C_{i_1}])_{ev_2}\times_{X,ev_1} \ldots _{ev_2}\times_{X,ev_1} \M_{0,2}(X,[C_{i_m}])\] $(k=1,2)$, in a natural way. Moreover, we have an obvious evaluation map induced by the first pointed point and the last pointed point of the chains, this moduli space is the following
\[ev_{1,m+1}:Chn_2(X,i_1,\ldots,i_m)\rightarrow X\times X.\]
see the figure below.
\end{defi}

\begin{center}
\includegraphics[scale=0.3]{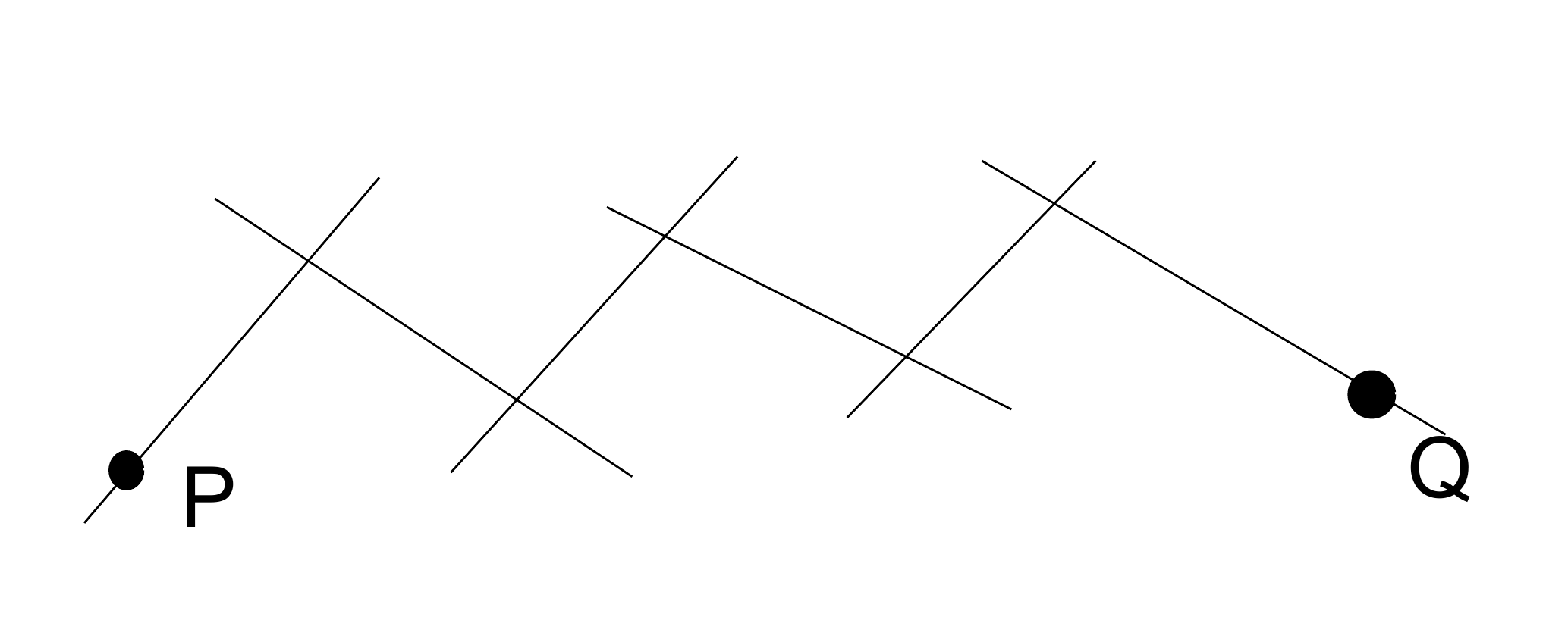}\\
\textit{Figure 0}
\end{center}

\begin{lemm}\label{RC}
The moduli space $Chn_2(X,i_1,\ldots,i_m)$ is a smooth irreducible and rationally connected variety.
\end{lemm}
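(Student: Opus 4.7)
The plan is to argue by induction on the chain length $m$. For the base case $m=1$, one has $Chn_2(X,i_1)=\M_{0,2}(X,[C_{i_1}])$; since $[C_{i_1}]$ is a Schubert line class, which is minimal in the Mori cone of $G/P$, every stable map of this class is an embedding of $\PP^1$ onto a Schubert line, and the unpointed moduli $\M_{0,0}(X,[C_{i_1}])$ is itself a $G$-homogeneous variety $G/Q_{i_1}$ for a suitable parabolic $Q_{i_1}$. Adjoining the two marked points introduces two successive universal-$\PP^1$-bundles over this base, so $\M_{0,2}(X,[C_{i_1}])$ is smooth, irreducible and in fact rational, hence rationally connected.

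For the inductive step I use the identification
\[
Chn_2(X,i_1,\ldots,i_m)=Chn_2(X,i_1,\ldots,i_{m-1})\times_{X}\M_{0,2}(X,[C_{i_m}]),
\]
with the gluing along $ev_m$ on the first factor and $ev_1$ on the second. The key input is that $ev_1\colon \M_{0,2}(X,[C_{i_m}])\to X$ is smooth and surjective with irreducible rational fibers. Smoothness and surjectivity follow from the fact that $ev_1$ is $G$-equivariant for the natural lift of the $G$-action to the Kontsevich space and $G$ acts transitively on $X$. The geometric fiber over a point $x\in X$ is a $\PP^1$-bundle (recording the position of the second marked point along the Schubert line) over the parameter space of Schubert $[C_{i_m}]$-lines through $x$, which is itself a homogeneous variety under the stabilizer parabolic $P_x$, hence smooth, irreducible and rational. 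Smoothness of $ev_1$ propagates smoothness through the fiber product, while irreducibility of the fibers combined with the inductive irreducibility of the base yields irreducibility of $Chn_2(X,i_1,\ldots,i_m)$.

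For rational connectedness I would invoke the Graber--Harris--Starr theorem applied to the projection
\[
Chn_2(X,i_1,\ldots,i_m)\longrightarrow Chn_2(X,i_1,\ldots,i_{m-1}):
\]
its general fiber is a $\PP^1$-bundle over the homogeneous $P_x$-variety of Schubert $[C_{i_m}]$-lines through a fixed point, so is rational, and its base is rationally connected by the inductive hypothesis, whence the total space is rationally connected. The principal obstacle I foresee is the identification of the fiber of $ev_1\colon\M_{0,2}(X,[C_i])\to X$ as a pointed homogeneous space under the stabilizer parabolic $P_x$ (together with the routine verification that $G$ lifts to the relevant Kontsevich stack); once this Bruhat-theoretic step is dispatched, the rest of the lemma follows by formal manipulations with fiber products and the Graber--Harris--Starr theorem.
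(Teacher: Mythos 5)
Your overall architecture (induction on $m$, the fiber-product description of $Chn_2$, smoothness via smoothness of the evaluation map, and Graber--Harris--Starr to pass rational connectedness from base and fiber to the total space) is the same as the paper's. But there is a genuine gap in the key input: you assert that $\M_{0,0}(X,[C_{i}])$ is a $G$-homogeneous variety $G/Q_{i}$, and correspondingly that the space of Schubert $[C_i]$-lines through a fixed point $x$ is a homogeneous variety under the isotropy group $P_x$, hence rational and irreducible. This is false in general. By the structure theory the paper cites (\cite[Theorem 4.3]{JL}), the Fano scheme $\M_{0,0}(X,[C_i])$ of lines of a fixed minimal class is either $G/P'$ \emph{or} the union of two $G$-orbits, a dense open orbit together with its boundary $G/P'$; the second case genuinely occurs (e.g.\ for lines on symplectic Grassmannians associated to non-long roots), and in that case neither the Fano scheme of lines nor the VMRT through a point is homogeneous. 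So the step on which your whole induction rests --- ``the fiber of $ev_1$ is a $\PP^1$-bundle over a pointed homogeneous space, hence smooth, irreducible and rational'' --- does not hold as stated.

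The repair is what the paper actually does: even in the two-orbit case, $\M_{0,1}(X,[C_i])$ is still irreducible and rational (it contains a dense $G$-orbit), and one then invokes a descent statement (\cite[Lemma 15.6]{DS}, using that parabolic subgroups are connected) to conclude that the fibers of the $G$-equivariant map $ev:\M_{0,1}(X,[C_i])\to X$ are irreducible and rationally connected, without ever claiming they are homogeneous or rational. With that substitution --- ``fibers of $ev$ are irreducible and rationally connected'' in place of ``fibers of $ev$ are pointed homogeneous spaces'' --- your inductive scheme and your application of Graber--Harris--Starr go through, since rational connectedness (rather than rationality) of the general fiber is all that theorem requires. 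You should also justify smoothness of $\M_{0,1}(X,[C_i])$ itself (it is needed before generic smoothness plus $G$-equivariance gives smoothness of $ev$); the paper gets this from the $\PP^1$-bundle structure of the forgetful maps over the Fano scheme of lines, which is smooth because lines on a homogeneous space are unobstructed.
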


\begin{proof}
The fibers of $ev:\M_{0,1}(X,[C_i])\rightarrow X$ are smooth irreducible and rationally connected varieties. Indeed, by \cite[Lemma 15.6]{DS} and the fact that any parabolic subgroup is connected, we only need to prove $\M_{0,1}(X,[C_i])$ is rationally connected. If $G$ is simple, then , by \cite[Theorem 4.3]{JL}, the Fano scheme $\M_{0,0}(X,[C_i])$ has two possibilities
\begin{center}
    it is $G/P'$ or the union of two $G$-orbits, an open obit and its boundary $G/P'$
\end{center}
where $P'$ is some parabolic subgroup related to $P$. It implies that $\M_{0,1}(X,[C_i])$ is rational, in particular, rationally connected. In general, see the paper \cite[Theorem 3]{KP}. Since we have the following cartesian diagram
\[\xymatrix{ev_1^{-1}(p)\ar[r]\ar[d]\ar@{}[dr]|-{\Box} &ev^{-1}(p) \ar@{}[dr]|-{\Box} \ar[r]\ar[d] & \{p\}\ar[d]\\
\M_{0,2}(X,[C_i])\ar[r]^{F}\ar@/_2pc/[rr]_{ev_1} & \M_{0,1}(X,[C_i])\ar[r]^-{ev} & X}
\]
where the map $F$ is the forgetful map to forget the second pointed point, therefore, $\M_{0,2}(X,[C_i])$ is the universal bundle bundle over $\M_{0,1}(X,[C_i])$ via $F$, in particular, it is smooth ($\PP^1$ bundle). Hence, the fiber $ev_1^{-1}(p)$ is a $\PP^1$ bundle over $ev^{-1}(p)$ which is a smooth rationally connected projective variety. So $ev_1^{-1}(p)$ is a smooth rationally connected projective variety, by symmetry, so $ev_2^{-1}(p)$ is. By induction on $m$ and the fiber product structure of $Chn_2(X,i_1,\ldots,i_m)$, it is easy to see \[Chn_2(X,i_1,\ldots,i_m)=\M_{0,2}(X,[C_{i_1}])_{ev_2}\times_{X,ev_1} \ldots _{ev_2}\times_{X,ev_1} \M_{0,2}(X,[C_{i_m}])\]
is smooth, irreducible and rationally connected.
\end{proof}

\begin{lemm} \label{Zhu}
There exists $m$ and $i_1,\ldots ,i_m \in \{1,\ldots r\}$ such that the fiber of the evaluation map $ev_{1,m+1}$ is irreducible.
\end{lemm}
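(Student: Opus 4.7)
The plan is to split the proof into two parts: (i) choose a pattern for which $ev_{1,m+1}$ is dominant, and (ii) show the corresponding general fiber is connected. Together with Lemma \ref{RC} (which gives that $Chn_2(X,i_1,\ldots,i_m)$ is smooth and irreducible) and generic smoothness of $ev_{1,m+1}$ over the smooth base $X\times X$, a connected general fiber would automatically be smooth, hence irreducible.

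For (i), the image of $ev_{1,m+1}$ in $X\times X$ is closed by properness and invariant under the diagonal $G$-action on $X\times X$. Since the classes $[C_1],\ldots,[C_r]$ generate the Mori cone of $X=G/P$ and $X\times X$ has only finitely many $G$-orbits under the diagonal action (by the Bruhat decomposition $P\backslash G/P$), successively appending new Schubert classes strictly enlarges the image until it fills $X\times X$. Thus cycling through the pattern $(1,2,\ldots,r)$ sufficiently many times produces a dominant (indeed, surjective) evaluation map for some $m$.

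For (ii), I would form the Stein factorization
\[Chn_2(X,i_1,\ldots,i_m) \xrightarrow{\alpha} Z \xrightarrow{\beta} X\times X,\]
with $\alpha$ having geometrically connected fibers and $\beta$ finite; then $Z$ is integral by irreducibility of $Chn_2$, and $\beta$ is $G$-equivariant and étale over the open diagonal $G$-orbit $U\subset X\times X$. Connectedness of the general fiber of $ev_{1,m+1}$ is equivalent to $\deg\beta=1$, and this is the main obstacle. My strategy is to pass to the simply connected cover of the semi-simple group $G$ (which does not change the flag variety $G/P$, up to replacing $P$), so that $X$ is simply connected, and then either invoke Zariski--Nagata purity to conclude that $\pi_1^{\text{et}}(U)$ is trivial (when $X\times X\setminus U$ has codimension $\geq 2$, as in the projective or quadric case), or else supplement with a direct \emph{chain-extension} argument: by appending further Schubert curves one can deform any two chains with the same endpoints into one another inside a larger chain space, using the rational connectedness of chain spaces from Lemma \ref{RC} together with the $G$-homogeneity of $X$, thereby collapsing all components of the general fiber into one. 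Either way $\beta$ becomes birational, and normality of $X\times X$ with Zariski's Main Theorem then forces $\deg\beta=1$, yielding a connected and hence irreducible general fiber.
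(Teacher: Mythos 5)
Your skeleton (prove surjectivity of $ev_{1,m+1}$, then take the Stein factorization $Chn_2\to Z\xrightarrow{\ \beta\ } X\times X$ and show $\deg\beta=1$) is the same reduction the paper makes via Lemma \ref{RC} and \cite[Lemma 15.6]{DS}, but your step (ii) has a genuine gap. The Zariski--Nagata purity route needs $X\times X\setminus U$ to have codimension at least $2$, and this fails for most $G/P$: already for $X=\mathbb{G}(k,n)$ with $0<k<n-1$ the pairs of subspaces in special position contain a divisorial $G$-orbit closure, so $U$ is the complement of a divisor and purity says nothing. The fallback ``chain-extension'' argument is not a proof: ``appending Schubert curves deforms any two chains with common endpoints into one another'' is essentially a restatement of the irreducibility being claimed, and rational connectedness of $Chn_2$ does not by itself force connected fibers over a simply connected target (a degree-two cover $\PP^1\to\PP^1$ already has irreducible rationally connected source, simply connected target, and disconnected fibers). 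The idea you are missing --- the actual content of the appeal to \cite[Lemma 15.6]{DS} --- is to use the homogeneity of the open orbit $U$ itself rather than its topology as an open subset of $X\times X$: after replacing $G$ by its simply connected cover, $U\cong G/Q$ where $Q$ is the stabilizer of a general pair, i.e.\ an intersection of two parabolic subgroups containing a common maximal torus, and such an intersection is \emph{connected}; hence $\pi_1(U)=\pi_0(Q)$ is trivial by the homotopy sequence of $Q\to G\to G/Q$. Since $Z$ is irreducible and $\beta$ is $G$-equivariant and \'etale over all of $U$ (its branch locus is $G$-invariant and proper, so misses the open orbit), this forces $\deg\beta=1$ irrespective of the codimension of $X\times X\setminus U$.

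Your step (i) is also under-justified, and here the paper does real work that you elide. The assertion that appending Schubert classes ``strictly enlarges'' the image until it fills $X\times X$ is exactly the statement that any two points of $G/P$ are joined by a chain of Schubert lines of bounded length; generation of the Mori cone by $[C_1],\ldots,[C_r]$ is a numerical fact and does not by itself produce such chains (a curve class being a nonnegative combination of the $[C_i]$ does not exhibit a geometric chain of lines with the required incidences). The paper proves this by induction on the rank of $G$: reduce to $G/B$, project to the Picard-number-one quotients $G/P_{\Delta-\{\alpha\}}$, connect points there by chains of Schubert lines of length at most the dimension using \cite[Corollary 4.14]{KO1}, lift the chain, and connect the endpoints inside the fibers $P_{\Delta-\{\alpha\}}/B$ by the rank induction. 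Some argument of this kind (or the Bott--Samelson description of Schubert varieties as iterated $\PP^1$-bundles) is needed to make your ``strictly enlarges'' step honest.
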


\begin{proof}
To prove this lemma, by Lemma \ref{RC} and \cite[Lemma 15.6]{DS}, it is sufficient to prove there exists $m$ and $i_1,\ldots ,i_m \in \{1,\ldots r\}$ such that the evaluation map
 \begin{center}
 $ev_{1,m+1}: Chn_2(X,i_1,\ldots,i_m)\rightarrow X\times X$
\end{center}
is surjective. In other words, it is suffice to prove for any two points $p,q\in X$, we can find a chain of Schubert curves to connect them and the length of the chain is bounded by a number independent of $p$ and $q$. In fact, the proof is more or less similar as the proof in the preprint \cite[Proposition 10.6]{ZHU}. We sketch the proof here. Since the parabolic group $P$ contains a Borel subgroup $B$, by the Bruhat Decomposition, we know the natural map \[G/B\rightarrow G/P\] associates to Schubert curves in $G/B$ Schubert curves in $G/P$. It reduces the problem to $G/B$. When the rank of $G$ is one, the map $ev_{1,m+1}$ is surjective since $G/B=\PP^1$. We consider the natural map
\[q:G/B\rightarrow G/P_{\Delta-\{\alpha\}}\]
where $\Delta$ is the set of simple roots of $G$ and $\alpha\in \Delta$. We know $G/P_{\Delta-\{\alpha\}}$ is of Picard number one. By \cite[Corollary 4.14]{KO1}, we know any two points in $G/P_{\Delta-\{\alpha\}}$ can be connected by a chain of Schubert curves in $G/P_{\Delta-\{\alpha\}}$ of length at most $dim(G/P_{\Delta-\{\alpha\}})$. For any two points $s$, $t\in G/B$, by homogeneity, it is clear that we can lift a chain of Schubert curves in $G/P_{\Delta-\{\alpha\}}$ connecting $q(s)$ and $q(t)$ to a chain of Schubert curves in $G/B$ connecting $s'$ and $t'$, where $s$ and $s'$ (.resp $t$ and $t'$) are in the same fiber of $q$. The fiber $q$ is $P_{\Delta-\{\alpha\}}/B$ which is a homogenous space under a semi-simple linear subgroup of $P_{\Delta-\{\alpha\}}$ of smaller rank. By induction on rank, we can choose chains of Schubert curves to connected $s$ and $s'$ (.resp $t$ and $t'$). These three chains provide a chain of Schubert curves connecting $p$ and $q$.

\end{proof}
We know the invertible sheaf $L = L_1 + \ldots  + L_r$ is ample. Since X is simply connected and homogeneous, by Stein factorization, the invertible sheaf $L$ is in fact very ample. Therefore, using this embedding, Schubert curves are lines in some projective space $\PP^N$, that is why we call it Schubert lines.

\begin{thm} \label{trivial}
For a homogenous space $X=G/P$, under the natural embedding, a vector bundle $V$ on $G/P$ is trivial if and only if the restriction of $V$ to every Schubert line is trivial.
\end{thm}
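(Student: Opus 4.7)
The plan is to apply the push-forward argument underlying Lemma \ref{lemma0} to a universal family of chains of Schubert lines through a fixed basepoint, and then to descend along the evaluation map to $X$. By Lemma \ref{Zhu}, choose $m$ and $i_1,\ldots,i_m\in\{1,\ldots,r\}$ so that
\[
ev_{1,m+1}\colon Chn_2(X,i_1,\ldots,i_m)\to X\times X
\]
is surjective with irreducible fibers. Fix $p\in X$ and set $B=ev_1^{-1}(p)\subset Chn_2(X,i_1,\ldots,i_m)$; the iterated $\PP^1$-bundle and fiber-product structure used in the proof of Lemma \ref{RC} shows that $B$ is irreducible, and the restriction $e\colon B\to X$ of $ev_{m+1}$ is then proper surjective with irreducible fibers. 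Let $\pi\colon\mathcal{C}\to B$ be the universal chain, with first- and last-marked-point sections $\sigma,\tau\colon B\to\mathcal{C}$ and total evaluation $f\colon\mathcal{C}\to X$; then $f\circ\sigma\equiv p$ and $f\circ\tau=e$, and each fiber of $\pi$ is a connected tree of Schubert lines, i.e.\ a nodal curve of arithmetic genus zero.

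Writing $n=\mathrm{rk}(V)$, the hypothesis that $V|_L$ is trivial for every Schubert line $L$ immediately implies that $f^*V$ is trivial on every fiber of $\pi$, because trivializations on components of a tree of smooth rational curves glue across the nodes. Hence $H^1$ vanishes and $h^0$ is constantly $n$ on fibers of $\pi$, so cohomology and base change produce a locally free sheaf $E:=\pi_*f^*V$ of rank $n$ on $B$ with $\pi^*E\xrightarrow{\sim}f^*V$. Pulling back by the section $\sigma$ yields
\[
E=\sigma^*\pi^*E=\sigma^*f^*V=V_p\otimes\OO_B,
\]
so $E$ is trivial; therefore $f^*V\cong\OO_{\mathcal{C}}^{\oplus n}$, and pulling back by $\tau$ gives $e^*V\cong\OO_B^{\oplus n}$.

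It remains to descend along $e$. Since $e$ is proper surjective with geometrically connected (indeed irreducible) fibers, and $X$ is smooth and hence normal, Stein factorization forces $e_*\OO_B=\OO_X$. The projection formula then gives
\[
V=V\otimes e_*\OO_B=e_*(e^*V)=e_*\OO_B^{\oplus n}=\OO_X^{\oplus n},
\]
as required. The main technical point to watch is precisely the identification $e_*\OO_B=\OO_X$, which is where the irreducibility of the fibers of $ev_{1,m+1}$ supplied by Lemma \ref{Zhu} plays its essential role; note also that one cannot directly apply Lemma \ref{lemma0} to $f\colon\mathcal{C}\to X$, since the fibers of $f$ need not be irreducible when $m>1$, and it is exactly this defect that forces the two-step descent above (trivialize on $\mathcal{C}$ via $\sigma$, then push forward along $e$).
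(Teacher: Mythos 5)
Your proof is correct, and it runs on the same engine as the paper's: chains of Schubert lines of a fixed pattern through a basepoint $p$, Lemma \ref{Zhu} for the irreducibility of $ev_{1,m+1}^{-1}(p,q)$, triviality of $f^*V$ on the genus-zero fibers of $\pi$ by gluing over the nodes, cohomology and base change against the constant section $\sigma$, and finally Stein factorization plus the projection formula. The genuine difference is in how you descend to $X$. The paper applies Lemma \ref{lemma0} directly to the total evaluation $f\colon C\to X$ of the universal curve over $B$, and to verify hypothesis (3) of that lemma it takes $m$ to be the \emph{smallest} length for which $ev_{1,m+1}$ is surjective: minimality forces a general $q$ to be hit only by points on the last component of a chain through $p$, so that $f^{-1}(q)=ev_{1,m+1}^{-1}(p,q)$ is integral. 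You instead perform a two-step descent: trivialize $f^*V$ on all of $\mathcal{C}$ (which uses nothing about the fibers of $f$), restrict along the last-marked-point section $\tau$ to get $e^*V\cong\OO_B^{\oplus n}$, and then push forward along $e=ev_{m+1}|_B$, whose general fibers are exactly the $ev_{1,m+1}^{-1}(p,q)$ handled by Lemma \ref{Zhu}. This buys you the freedom to take any $m$ for which Lemma \ref{Zhu} holds, with no minimality argument; the small price is that you must separately check that $B=ev_1^{-1}(p)$ is irreducible (which the iterated fibration structure from the proof of Lemma \ref{RC} does give) and that the general fiber of $e$ is generically reduced as well as irreducible (generic smoothness in characteristic zero, since $Chn_2(X,i_1,\ldots,i_m)$ is smooth), so that the Stein factorization of $e$ collapses and $e_*\OO_B=\OO_X$. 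One correction to your closing remark: it is not true that one ``cannot directly apply Lemma \ref{lemma0} to $f$'' --- that is precisely what the paper does; the possible reducibility of $f^{-1}(q)$ that you worry about is exactly what the minimality of $m$ is there to rule out.
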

\begin{proof}
By the previous lemma, we have an evaluation map \[ev_{1,m+1}:Chn_2(X,i_1,\ldots,i_m)\rightarrow X\times X\] where $ev_{1,m+1}=(ev_1,ev_{m+1})$. The evaluation map is surjective with integral fibers. Let $m$ be the smallest number such that $ev_{1,m+1}$ is surjective for some ${i_1,\ldots, i_m}$. The geometrical meanings for $m$ is that it is the shortest length of chain of Schubert lines to connect two general points in $G/P$. Similarly, we can define \[Chn_1(X,i_1,\ldots,i_m)=\M_{0,2}(X,[C_{i_1}])_{ev_2}\times_{X,ev_1} \ldots _{ev_2}\times_{X,ev_1} \M_{0,1}(X,[C_{i_m}])\]
with the unique evaluation map
\[ev:Chn_1(X,i_1,\ldots,i_m)\rightarrow X.\]
We have the following diagram
\[\xymatrix{ D\ar@/^2pc/[rr]^{ev_{m+1}|_D}\ar@{}[ddr]|-{\Box}\ar@{^(->}[r]^{j}\ar@{^(->}[d]\ar[dr] & C\ar[d]_{\pi}\ar[r]^{f} &X\\
 Chn_2(X,i_1,\ldots,i_m)\ar[dr]^F \ar@/_3pc/[drr]_{ev_{1}} &B\ar@/_1pc/[u]_{\sigma} \ar[r]\ar@{}[dr]|-{\Box}\ar@{^(->}[d] & \{pt\}\ar[d]\\
 &Chn_1(X,i_1,\ldots,i_m)\ar[r]^-{ev}& X},\]
 where \begin{enumerate}
         \item $B$ is $ev^{-1}(p)$ and $D$ is $ev_1^{-1}(p)$,
         \item $F$ is the forgetful map to forget the second point on the chains,
         \item $f:C\rightarrow X$ is the universal bundle over $B$ and $\sigma$ is a section of $\pi$ such that $f\circ \sigma=\{p\}$
         \item $j$ is a natural inclusion which corresponds to the last component of $C_b$ over a general point $b\in B$.
       \end{enumerate}

 Since the minimality $m$ and $(4)$ above, we have ,for a general point $q\in X$ ,
 \[f^{-1}(q)=ev_{m+1}|_D^{-1}(q)=ev_{1,m+1}^{-1}(p,q)\]
 which implies $f^{-1}(q)$ is integral.Since it is obvious that a vector bundle of a chain of rational curves is trivial if and only if its restriction to every component of the chain is trivial, we apply Lemma \ref{lemma0} to \[\xymatrix{B \ar@/^1pc/[r]^{\sigma} & C\ar[l]_{\pi}\ar[r]_f &X}\]and complete the proof.
\end{proof}
We provide a simple proof for this theorem base on \cite[Proposition 1.2]{AW}. We sketch a proof here.
\begin{proof}(sketch) We prove this theorem by induction on the rank of $X=G/P$. For rank$=1$, $G/P$ is just $\PP^1$, the proof is trivial. Since $P$ is connected and contains a Borel subgroup $B$, we consider the fibration \[\xymatrix{P/B\ar@{^(->}[r] &G/B \ar[r]^{p} &G/P},\] we have
\begin{center}
 $p_*(\OO_{G/B})=\OO_{G/P}$ and $p$ maps Schubert lines to a point or Schubert lines.
\end{center}
By the projection formula for vector bundles $p^*V$, we reduce the problems to $G/B$. For $G/B$, we apply a similar argument as Lemma \ref{Zhu}. With the same notation as in the proof of Lemma \ref{Zhu}, we consider the following fibration
\[\xymatrix{P_{\Delta-\{\alpha\}}/B\ar@{^(->}[r] &G/B \ar[r]^-{q} &G/P_{\Delta-\{\alpha\}}},\]
the Schubert lines in $P_{\Delta-\{\alpha\}}/B$ is the Schubert lines in $G/B$ via the natural inclusion. Therefore, by induction, we know $V$ is trivial on very fiber of $q$. Since $q$ is smooth with connected proper fibers, by the base change theorem, we have
\begin{center}
$V=q^*W$ for some vector bundle on $W$.
\end{center}
To prove $V$ is trivial is sufficient to prove $W$ is trivial. Since every Schubert line in $G/P_{\Delta-\{\alpha\}}$ can be lifted to a Schubert line in $G/B$, we reduce the problems to $G/P_{\Delta-\{\alpha\}}$ which is of Picard number one. Therefore, it concludes the theorem by \cite[Proposition 1.2]{AW}.

\end{proof}

\section{A Splitting Criterion of Vector Bundles on Homogenous Spaces}

Let $G/P$ be a homogenous space of Picard number one. The ample generator of its Picard group is very ample. Under the embedding induced by this very ample line bundle, we can talk about lines on $G/P$ and  the variety of minimal rational tangents (VRMT for short) on $G/P$. We refer to \cite{HW} for a complete account on the VMRT. In this case, VMRT is just the fiber of the evaluation map
\[ev: \M_{0,1}(G/P,1)\rightarrow G/P.\]
In principle, we can describe the VMRT of a homogenous space by the root data, see \cite[Section 4]{JL} for the details.

\begin{defi}
We call a smooth projective variety $X$ has a good divisibility up to degree $r$ if $x\cdot y=0$, then we have $x=0$ or $y=0$, where $x\in CH^i(X)$, $y\in CH^j(X)$ and $i+j\leq r$.
\end{defi}

\begin{thm}\label{split}
Let $G/P$ be a homogenous space of Picard number one over complex numbers $\mathbb{C}$. If its $VRMT$ has good divisibility up to degree $r$, then a uniform vector bundle $V$ on $G/P$ of rank at most $r$ is splitting.
\end{thm}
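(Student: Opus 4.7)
The plan is to adapt the classical Elencwajg--Hirschowitz--Schneider argument for uniform bundles on $\mathbb{P}^n$ to our setting, using Theorem~\ref{trivial} in place of the usual projective-space triviality criterion. Write the splitting type on Schubert lines as $(a_1\ge\cdots\ge a_n)$, grouped into distinct values $b_1>\cdots>b_s$ with multiplicities $m_1,\ldots,m_s$; I induct on the number $s$ of distinct slopes by producing a saturated subbundle $W\subset V$ with $W\cong\OO(b_1)^{m_1}$ and a uniform quotient $V/W$ of strictly smaller rank.

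Form the universal line diagram
\[
\xymatrix{\mathcal{U}=\M_{0,1}(G/P,1)\ar[r]^-{q}\ar[d]_{\pi} & G/P\\ \M_{0,0}(G/P,1) & }
\]
in which $\pi$ is a $\PP^1$-bundle and $q$ is the evaluation, with $q^{-1}(x)=\mathcal{V}_x$ the VMRT. Because $V$ is uniform, the relative Harder--Narasimhan filtration of $q^*V$ with respect to $\pi$ is a well-defined flag of subbundles $0\subsetneq F_1\subsetneq\cdots\subsetneq F_s=q^*V$ on $\mathcal{U}$ with $F_i/F_{i-1}$ restricting to $\OO(b_i)^{m_i}$ on each fiber of $\pi$. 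The core step is to show $F_1=q^*W$ for some subbundle $W\subset V$: for each $x\in G/P$ the restriction $F_1|_{\mathcal{V}_x}\subset V_x\otimes\OO$ has rank $m_1$ and determines a classifying map $\phi_x\colon \mathcal{V}_x\to \mathrm{Gr}(m_1,V_x)$; dualizing the sequence $0\to F_1|_{\mathcal{V}_x}\to V_x\otimes\OO\to Q\to 0$ exhibits $F_1^*|_{\mathcal{V}_x}$ as a quotient of the trivial bundle $V_x^*\otimes\OO$, hence globally generated. Any globally generated bundle with trivial determinant has constant classifying map via the Pl\"ucker embedding, so it suffices to prove $c_1(F_1|_{\mathcal{V}_x})=0$. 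From the Whitney identity $c(F_1)c(Q)=1$ in $\ch^*(\mathcal{V}_x)$ the top-degree relation reads $c_{m_1}(F_1)\cdot c_{n-m_1}(Q)=0$ in degree $n\le r$, and good divisibility forces one factor to vanish; iterating the Whitney relation in degrees $n-1,n-2,\ldots,1$ and reapplying good divisibility at every step propagates the vanishing down to $c_1(F_1|_{\mathcal{V}_x})=0$.

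Once $F_1=q^*W$ is in hand, Theorem~\ref{trivial} applied to $W(-b_1)$ (trivial on every Schubert line by construction) yields $W\cong \OO(b_1)^{m_1}$. The quotient $V/W$ is uniform of rank $n-m_1<n$ with one fewer distinct slope, so by induction on rank $V/W\cong\bigoplus_{i\ge 2}\OO(b_i)^{m_i}$. To finish, split the extension $0\to W\to V\to V/W\to 0$ whose obstruction lies in
\[
\textrm{Ext}^1(V/W,W)=\bigoplus_{i\ge 2}H^1(G/P,\OO(b_1-b_i))^{\oplus m_1 m_i},
\]
which vanishes by Kodaira vanishing on the Fano variety $G/P$ since $b_1-b_i>0$ for every $i\ge 2$.

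The principal obstacle is the iterative Chern-class vanishing: starting from $c_{m_1}(F_1)c_{n-m_1}(Q)=0$, one must carefully feed the case split of good divisibility back into the next-lower Whitney relation and verify that, on every branch, the argument still terminates at $c_1(F_1|_{\mathcal{V}_x})=0$. A secondary technical point is identifying (or comparing via the tangent map) $q^{-1}(x)$ with the VMRT $\mathcal{V}_x$ cleanly enough that the good-divisibility hypothesis actually applies on the variety where the Whitney argument is being run.
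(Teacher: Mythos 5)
Your proposal is correct and follows essentially the same route as the paper: the paper's subbundle $H=p^*p_*ev^*V$ (after twisting so the top slope is $0$) is exactly the first step of your relative Harder--Narasimhan filtration, and both arguments descend it to $G/P$ by showing triviality on the VMRT via global generation plus the Whitney-formula/good-divisibility Chern class vanishing, then finish by induction and an $\mathrm{Ext}^1$ vanishing. The only differences are cosmetic (induction on the number of distinct slopes rather than on the rank, and Kodaira vanishing in place of the paper's blanket $H^1(G/P,L)=0$).
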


\begin{proof}
Since we can twist the vector bundle $V$ with the ample generator of $Pic(G/P)$, by Theorem \ref{trivial}, we can assume the vector bundle $V$ has type $(a_1,a_2,\ldots,a_r)$ when we restrict it to Schubert lines where
\begin{center}
    $a_1=a_2=\ldots=a_k=0$ and $0>a_{k+1}\geq a_{k+2}\geq\ldots \geq a_r$.
\end{center}
We have the following diagram, \[\xymatrix{ & VRMT\ar[r]\ar[d] \ar@{}[dr]|-{\Box} & \{pt\}\ar[d]\\
\M_{0,0}(G/P,1)& C\ar[r]^{ev} \ar[l]_-p & G/P
}\]
where $C$ is the universal bundle of $\M_{0,0}(G/P,1)$, i.e, it is $\M_{0,1}(G/P,1)$. Under our assumption, we have
\[H^0(C_x,ev^*V|_{C_x})=H^0(\PP^1, \left(\oplus^k \OO_{\PP_1}\right)\oplus\left(\oplus_{i= k+1}^r\OO_{\PP^1}(a_i)\right))=\mathbb{C}^k,\]
where $x\in \M_{0,0}(G/P,1)$. Since the dimension does not change when $x$ varies, the push forward $p_*ev^*V$ is a vector bundle of rank $k$ on $\M_{0,0}(G/P,1)$. We claim there exists a short exact sequence of vector bundles as following,
\[0\rightarrow H=p^*p_*ev^*V\rightarrow ev^*V \rightarrow Q\rightarrow 0.\]
In fact, if we restrict to $C_x$, then we get \[H|_{C_x}=\OO_{C_x}^{\oplus k}\subseteq ev^*V|_{C_x}=\OO_{C_x}^{\oplus k}\oplus\left(\oplus^r_{i= k+1}\OO_{C_x}(a_i)\right).\]
In particular, the quotient $Q|_{C_x}$ is a vector bundle $\oplus_{i= k+1}^r\OO_{C_x}(a_i)$, hence, the claim is clear.

The second claim is that there exists vector bundles $V_1$ and $V_2$ on $G/P$ such that
\begin{center}
    $H\cong ev^*V_1$ and $Q\cong ev^*V_2$.
\end{center}
Since the evaluation map is smooth and $VRMT=ev^{-1}(p)$, it is enough to prove that
\begin{center}
    $H|_{VRMT}$ and $Q|_{VRMT}$ are trivial bundles for every $p\in G/P$.
\end{center}
Since the restriction of $ev^*V$ to $VRMT=ev^{-1}(p)$ is trivial for every $p\in G/P$, we get a short exact sequence of vector bundles as following
\[0\rightarrow H|_{VRMT}\rightarrow \underline{\mathbb{C}}^{\oplus r} \rightarrow Q|_{VRMT}\rightarrow 0.\]
Therefore, we have $Q|_{VRMT}$ and $H^{\vee}|_{VRMT}$ are global generated. Therefore, to prove the claim, it is sufficient to prove the first Chern classes are vanishing, i.e, \[c_1(H^{\vee}|_{VRMT})=c_1(Q|_{VRMT})=0.\]
By the Whitney formula of Chern classes and the above short exact sequence, we have $c(H|_{VRMT})\cdot c(Q|_{VRMT})=1$, i.e,
\[(c_k+c_{k-1}+\ldots+1)(\widetilde{c}_{r-k}+\widetilde{c}_{r-k-1}+\ldots+1)=1,\]
where we have
\begin{enumerate}
  \item    $c(H|_{VRMT})=c_k+c_{k-1}+\ldots+1$,
  \item $c(Q|_{VRMT})=\widetilde{c}_{r-k}+\widetilde{c}_{r-k-1}+\ldots+1$.
\end{enumerate}

Since $c_i$ and $\widetilde{c}_j$ are in $CH^*(VRMT)$ and $VRMT$ has good divisibility up to degree $r$, expanding the above equality of Chern classes and comparing both sides, we prove the claim. Therefore, we have the following exact sequence
\[0\rightarrow ev^*V_1\rightarrow ev^*V\rightarrow ev^*V_2 \rightarrow 0\]
Since we know $H|_{C_x}$ is trivial, by Theorem \ref{trivial}, the vector bundle $V_1$ is trivial. Since VRMT is rationally connected, the cohomology group $H^1(VRMT,\OO_{VRMT})$ is trivial. Therefore, apply the functor $ev_*$ to this short exact sequence, we have the following short exact sequence by projection formula and $R^1 ev_*(\OO_C)=0$,
\[0\rightarrow V_1\rightarrow V\rightarrow V_2 \rightarrow 0.\]
By induction on the rank of vector bundles, it implies that $V_2$ is splitting vector bundle. By Theorem \ref{trivial}, we know the vector bundle $V_1$ is trivial. Since $H^1(G/P,L)=0$ for any line bundle on $G/P$ if $dim(G/P)$ is at least $2$, we have that
\[Ext^1(V_2,V_1)=H^1(G/P,(V_2)^{\vee})=0.\] Therefore, we have $V=V_1\oplus V_2$, it completes the proof. For $dim(G/P)=1$, it is $\PP^1$, the theorem is trivial.

\end{proof}

\section{Application to Classical Grassmannians}
In the paper \cite{UVF}, the authors use different methods to prove that uniform vector bundles of low rank on Grassmannians and quadrics are splitting. In this section, we apply Theorem \ref{split} to provide a uniform proof for Grassmannians and quadrics. Applying Theorem \ref{split}, we also prove uniform vector bundles of low rank on other classical Grassmannians to have this similar properties, which can not be deduced from their approach.

\begin{lemm} \label{pp}
Projective space $\PP^m$ has good divisibility up to degree $m$.
\end{lemm}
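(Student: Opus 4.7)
The plan is to reduce the statement to the explicit description of the Chow ring of projective space and then observe that the claim becomes the integrality of $\mathbb{Z}$. Recall that $CH^*(\PP^m)\cong \mathbb{Z}[h]/(h^{m+1})$, where $h$ denotes the hyperplane class, and that each graded piece $CH^i(\PP^m)$ for $0\leq i\leq m$ is free of rank one generated by $h^i$, while $CH^i(\PP^m)=0$ for $i>m$.

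Given this, suppose $x\in CH^i(\PP^m)$ and $y\in CH^j(\PP^m)$ with $i+j\leq m$. Writing $x=a\,h^i$ and $y=b\,h^j$ for unique integers $a,b$, the product is $x\cdot y = ab\,h^{i+j}$. The crucial point is that since $i+j\leq m$, the class $h^{i+j}$ is a free generator of $CH^{i+j}(\PP^m)\cong \mathbb{Z}$, hence nonzero and non-torsion. Therefore $x\cdot y=0$ forces $ab=0$, and since $\mathbb{Z}$ is an integral domain this gives $a=0$ or $b=0$, i.e.\ $x=0$ or $y=0$. This verifies the good divisibility condition up to degree $m$.

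There is essentially no obstacle here: the entire content is the standard computation of $CH^*(\PP^m)$ and the fact that $\mathbb{Z}$ has no zero divisors. The only thing worth being slightly careful about is that the bound $i+j\leq m$ is sharp for this argument: if $i+j = m+1$, then $h^{i+j}=0$ and the conclusion can fail (for example $h\cdot h^m=0$ in $CH^*(\PP^m)$ with both factors nonzero), which is why the statement is phrased with a cutoff at degree $m$.
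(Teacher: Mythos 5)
Your proof is correct and is exactly the argument the paper intends: the paper simply cites $CH^*(\PP^m)=\mathbb{Z}[u]/(u^{m+1})$ and declares the claim obvious, while you spell out the reduction to the integrality of $\mathbb{Z}$ in each graded piece. The remark that the bound $i+j\leq m$ is sharp is a nice touch but not needed.
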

\begin{proof}
It is obvious since we know $CH^*(\PP^m)=\mathbb{Z}[u]/(u^{m+1})$, see \cite{F}.
\end{proof}

\begin{lemm}\label{div}
Let $X$ and $Y$ be projective varieties. If we have

\begin{enumerate}
  \item $X$ (.resp $Y$) has good divisibility up to degree s (.resp $t$),
  \item $CH^*(X\times Y)=CH^*(X)\otimes CH^*(Y)$,
  \item $CH^*(X)$ and $CH^*(Y)$ are free $\mathbb{Z}$-module,
\end{enumerate}
then $X\times Y$ is has good divisibility up to $min(s,t)$.
\end{lemm}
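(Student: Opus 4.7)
The plan is to argue by contradiction using the K\"unneth decomposition provided by hypothesis $(2)$ together with a leading-bidegree reduction. Suppose $\alpha, \beta \in CH^*(X \times Y)$ are nonzero with $\alpha \cdot \beta = 0$ and $i + j \leq \min(s, t)$, where $i = \deg \alpha$ and $j = \deg \beta$. By hypotheses $(2)$ and $(3)$, we may decompose
\[
\alpha = \sum_{p=0}^{i} \alpha_p, \qquad \beta = \sum_{q=0}^{j} \beta_q,
\]
with $\alpha_p \in CH^p(X) \otimes CH^{i-p}(Y)$ and $\beta_q \in CH^q(X) \otimes CH^{j-q}(Y)$, each piece lying in a direct summand that is a free $\mathbb{Z}$-module. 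Thus $\alpha \beta = 0$ forces every bidegree component to vanish individually.

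Let $p_0 = \min\{p : \alpha_p \neq 0\}$ and $q_0 = \min\{q : \beta_q \neq 0\}$. In the bidegree $(p_0 + q_0,\, i+j - p_0 - q_0)$, the only contribution to $\alpha \beta$ comes from $(p,q) = (p_0, q_0)$, since any other pair with the same $X$-degree sum forces $p < p_0$ or $q < q_0$, killing one factor by minimality. Consequently
\[
\alpha_{p_0} \cdot \beta_{q_0} = 0 \quad \text{in } CH^{p_0+q_0}(X) \otimes CH^{i+j-p_0-q_0}(Y).
\]
Writing $\alpha_{p_0} = \sum_k e_k \otimes y_k$ and $\beta_{q_0} = \sum_l f_l \otimes z_l$ in $\mathbb{Z}$-bases, the non-vanishing of both factors provides linear functionals $\phi \in (CH^{i-p_0}(Y))^*$ and $\psi \in (CH^{j-q_0}(Y))^*$ so that $\tilde\alpha := \sum_k \phi(y_k) e_k$ and $\tilde\beta := \sum_l \psi(z_l) f_l$ are nonzero in $CH^{p_0}(X)$ and $CH^{q_0}(X)$ respectively. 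By the good divisibility of $X$ at degree $p_0 + q_0 \leq i+j \leq s$, the product $\tilde\alpha \cdot \tilde\beta$ is nonzero in $CH^{p_0+q_0}(X)$.

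The crux is then to realize this non-vanishing product as an obstruction to $\alpha_{p_0} \beta_{q_0} = 0$ through an auxiliary functional $\theta \in (CH^{i+j-p_0-q_0}(Y))^*$ satisfying $(\mathrm{id} \otimes \theta)(\alpha_{p_0} \beta_{q_0}) = \tilde\alpha \cdot \tilde\beta$. The existence of $\theta$ amounts to the bilinear form $\phi \otimes \psi$ factoring through the multiplication $m_Y\colon CH^{i-p_0}(Y) \otimes CH^{j-q_0}(Y) \to CH^{i+j-p_0-q_0}(Y)$, equivalently to $\phi \otimes \psi$ annihilating $\ker(m_Y)$. The good divisibility of $Y$ at degree $(i-p_0) + (j-q_0) \leq t$ guarantees that $\ker(m_Y)$ contains no nonzero pure tensor; combined with a judicious choice of bases (for instance by first reducing $\alpha_{p_0}$ and $\beta_{q_0}$ to Smith normal form, so that $\{y_k\}$ and $\{z_l\}$ are each linearly independent), this is what one must leverage to extract the required $\theta$. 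I expect this descent of the pure tensor $\phi \otimes \psi$ through $m_Y$ to be the main obstacle; once $\theta$ is in hand, applying $\mathrm{id} \otimes \theta$ to the vanishing $\alpha_{p_0} \beta_{q_0}$ yields $\tilde\alpha \tilde\beta = 0$, the desired contradiction.
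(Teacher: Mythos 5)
Your reduction to the leading bidegree is correct and is the same first move as the paper's proof: taking $p_0=\min\{p:\alpha_p\neq 0\}$ and $q_0=\min\{q:\beta_q\neq 0\}$ does isolate $\alpha_{p_0}\cdot\beta_{q_0}=0$ in $CH^{p_0+q_0}(X)\otimes CH^{i+j-p_0-q_0}(Y)$. But from that point on your argument has a genuine gap, and you say so yourself: the existence of the functional $\theta$ with $\theta\circ m_Y=\phi\otimes\psi$ on $\mathrm{span}\{y_k\otimes z_l\}$ is never established. The fact that $\ker(m_Y)$ contains no nonzero \emph{pure} tensor (which is all that good divisibility of $Y$ gives you) says nothing about whether the particular functional $\phi\otimes\psi$ annihilates $\ker(m_Y)\cap\mathrm{span}\{y_k\otimes z_l\}$; a subspace with no pure tensors can still fail to lie in the kernel of $\phi\otimes\psi$, and conversely $\phi\otimes\psi$ can easily be nonzero on such a subspace. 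Moreover $\phi$ and $\psi$ were chosen only to make $\tilde\alpha$ and $\tilde\beta$ nonzero, so there is no mechanism forcing the additional compatibility with $m_Y$. The entire difficulty of the lemma is concentrated in exactly this step, and the proposal leaves it unresolved. (Smith normal form gives you linearly independent $\{e_k\}$ and $\{f_l\}$, or independent $\{y_k\}$ and $\{z_l\}$, but not both simultaneously in a way that makes the descent automatic.)

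For comparison, the paper's proof avoids the issue by writing each K\"unneth component of $A$ and $B$ as a single pure tensor $a_k\otimes b_l$, so that the leading term of the product is the pure tensor $(a_{k_1}\cdot c_{p_1})\otimes(b_{l_1}\cdot d_{q_1})$ in a tensor product of free $\mathbb{Z}$-modules, which vanishes only if one factor does; good divisibility of $X$ and of $Y$ then finishes. That argument is complete only when the relevant components really are pure tensors --- which holds in the paper's applications, e.g.\ for $\PP^k\times\PP^{n-k-1}$, where every graded piece of one factor has rank one --- but your setup, which correctly allows $\alpha_{p_0}=\sum_k e_k\otimes y_k$ with several terms, exposes the case the paper does not address. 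So you have located a real subtlety, but to close the gap you would need either to justify the descent of $\phi\otimes\psi$ through $m_Y$ (which I do not believe follows from the stated hypotheses alone) or to add a hypothesis (such as rank-one graded pieces on one factor) that reduces to the pure-tensor case.
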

\begin{proof}
Let $A \in CH^r(X\times Y)$ (.resp $B \in CH^h(X\times Y)$) be
\begin{center}
    $\sum\limits_{k+l=r} a_k \otimes b_l$ (.resp $\sum\limits_{p+q=h} c_p\otimes d_q$)
\end{center}
where \begin{enumerate}
        \item $r+h\leq min(s,t)$ and $l,k, p, q $ are nonnegative,
        \item $a_k\in CH^k(X)$ and $b_l\in CH^l(Y)$, similar for $c_p$ and $d_q$.
      \end{enumerate}
      Suppose $A\cdot B=0$. If $A$ and $B$ are nonzero, then we can assume \begin{itemize}
         \item $a_{k_1}\otimes b_{l_1} \neq 0$ where $k_1=min \{k| a_k\otimes b_l\neq 0\}$,
         \item $c_{p_1}\otimes d_{q_1} \neq 0$ where $p_1=min \{p| c_k\otimes b_l\neq 0\}$.
       \end{itemize}
       It implies that $a_{k_1}\cdot c_{p_1}=0$ or $b_{l_1}\cdot d_{q_1}=0$. Since we have
       \begin{itemize}
         \item $k_1+p_1\leq r+h \leq min(s,t)\leq s,$
         \item $l_1+q_1\leq r+h \leq min(s,t)\leq t,$
       \end{itemize}
       it implies that $a_{k_1}=0$ or $c_{p_1}=0$ or $b_{l_1}=0$ or $d_{q_1}=0$. It is a contradiction.
\end{proof}

\begin{lemm} \label{q1}\cite[Theorem 1.12, Page 32]{REID}
If $m=2n+1$ and $X$ is a $m$-dimensional quadric, then $$1,b,\ldots,b^{n-1},a,ba,\ldots,b^{n+1}a$$ generate $H^{2i}(X,\mathbb{Z})$ and $b^n=2a$ where $a\in H^{2n}(X,\mathbb{Z})$. In particular, the quadric $X$ has good divisibility up to $m$.
\end{lemm}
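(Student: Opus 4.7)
The plan is to use the explicit additive basis and the multiplicative relation $b^n = 2a$ supplied by the first half of the lemma to verify the divisibility condition by a direct computation.

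Let $g_i$ denote the $i$-th listed generator, so that $g_i = b^i$ for $0 \leq i \leq n-1$, $g_n = a$, and $g_i = b^{i-n} a$ for $n+1 \leq i \leq m$. Each group $CH^i(X) = H^{2i}(X,\mathbb{Z})$ is then free of rank one on $g_i$. Given $x \in CH^i(X)$ and $y \in CH^j(X)$ with $i + j \leq m$, I would write $x = c\, g_i$ and $y = d\, g_j$ with $c, d \in \mathbb{Z}$, so that $xy = cd \cdot (g_i g_j)$ lies in $CH^{i+j}(X) \cong \mathbb{Z}\cdot g_{i+j}$. The claim therefore reduces to showing that $g_i g_j$ is a nonzero integer multiple of $g_{i+j}$ whenever $i + j \leq m$; once this is verified, $xy = 0$ will force $cd = 0$, and hence $x = 0$ or $y = 0$.

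To compute $g_i g_j$ I would proceed by a short case analysis. When both indices are below $n$, the product $b^{i+j}$ equals $g_{i+j}$ if $i+j < n$, equals $2a = 2\, g_n$ if $i+j = n$, and equals $2\, b^{i+j-n} a = 2\, g_{i+j}$ for $n < i+j \leq m$, after one application of $b^n = 2a$. When exactly one index is at least $n$, say $j \geq n$, one gets $b^i \cdot b^{j-n} a = b^{i+j-n} a = g_{i+j}$, which is a basis element as long as $i + j - n \leq n+1$, i.e.\ $i + j \leq m$. The remaining case with both indices at least $n$ forces $i + j \geq 2n$, and the subcases compatible with $i + j \leq m$ are handled similarly by rewriting products involving $a^2$ via the relation $b^n = 2a$ back into the listed generators.

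The hard part, in so far as there is one, is only the bookkeeping of the factors of $2$ introduced by the relation $b^n = 2a$; no higher relation is needed because cohomology vanishes above codimension $m$, and the range $i + j \leq m$ keeps the products inside the listed basis. In every case the product $g_i g_j$ works out to either $g_{i+j}$ or $2\, g_{i+j}$, and in particular is nonzero in the torsion-free group $\mathbb{Z}\cdot g_{i+j}$, yielding the good divisibility of $X$ up to degree $m$.
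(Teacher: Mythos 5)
Your strategy --- reduce to showing $g_ig_j\neq 0$ for the rank-one generators and chase the factors of $2$ introduced by $b^n=2a$ --- is the natural one, and it is considerably more explicit than what the paper offers: the paper's entire proof of this lemma is the remark that $H^*(X)=CH^*(X)$ because a quadric is homogeneous (the Bruhat decomposition gives a cell structure). That identification is precisely the step you skip. Good divisibility is defined in terms of $CH^*$, so before computing in singular cohomology you must say why the cycle class map is an isomorphism here; it is a one-line fix, but it is the only content the paper's own proof supplies, so it should appear in yours.

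The genuine soft spot is the case you defer, namely $i,j\geq n$ with $i+j\leq m$, which forces $(i,j)=(n,n)$ or $(n,n+1)$ and hence a product involving $a^2$. You assert this is ``handled similarly by rewriting products involving $a^2$ via the relation $b^n=2a$,'' but that relation does not determine $a^2$ integrally: multiplying $b^n=2a$ by $a$ gives $b^na=2a^2$, i.e.\ $a^2=\tfrac12\, g_{2n}$, which is not an integral combination of the listed generators. In fact the statement as quoted carries an index shift relative to the standard presentation of $H^*(Q^{2n+1})$, in which $a$ is the class of a maximal linear subspace, $a\in H^{2(n+1)}$, the generators are $1,b,\dots,b^n,a,ba,\dots,b^na$, and the relation is $b^{n+1}=2a$. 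With that indexing your problematic case would require $i,j\geq n+1$, hence $i+j\geq 2n+2>m$, and it simply never occurs; every product with $i+j\leq m$ is then $g_{i+j}$ or $2g_{i+j}$ exactly as in your first two cases, and the argument closes. So the gap is repaired by correcting the indexing, not by the rewriting you propose; as written, the $a^2$ subcase is not actually proved.
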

\begin{lemm}  \label{q2} \cite[Theorem 1.13, Page 33]{REID}
If $m=2n$ and $X$ is a $m$-dimensional quadric, then $$1,c,\ldots,c^{n-1},a \textit{ and } b ,ca,\ldots,c^{n}a$$ generate $H^{2i}(X,\mathbb{Z})$ and $ca=cb$ where $a$ and $b$ $\in H^{2n}(X,\mathbb{Z})$. Moreover,
\begin{enumerate}
  \item $a^2=b^2=1$ and $ab=0$ if $n$ is even,
  \item $a^2=b^2=0$ and $ab=1$ if $n$ is odd,
\end{enumerate}
In particular, the quadric $X$ has good divisibility up to $m-1$.
\end{lemm}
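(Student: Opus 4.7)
The first half of the lemma is cited directly from Reid, so my plan focuses on deducing the good divisibility assertion from the explicit ring presentation. The approach is a direct case analysis: given $x\in CH^{i}(X)$, $y\in CH^{j}(X)$ with $i+j\leq m-1=2n-1$ and $x\cdot y=0$, expand both in Reid's basis and compute $xy$, showing $x=0$ or $y=0$. Since $i+j<2n$, the pair $(i,j)$ cannot have both entries equal to $n$, so by symmetry I may assume $j\leq n-1$, whence $CH^{j}(X)=\mathbb{Z}c^{j}$ and $y=\beta c^{j}$ for some $\beta\in\mathbb{Z}$.

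The case split is on $i$. If $i\leq n-1$, write $x=\alpha c^{i}$; then $xy=\alpha\beta c^{i+j}$, and using $c^{n}=a+b$ (implicit in the enumeration of Reid's basis) together with $c^{k}a=c^{k}b$ for $k\geq 1$ (a direct consequence of $ca=cb$), one verifies that $c^{i+j}$ is a nonzero multiple of a basis element throughout the range $0\leq i+j\leq 2n-1$, so $\alpha\beta=0$. If $i\geq n+1$, write $x=\alpha c^{i-n}a$; then $xy=\alpha\beta c^{i+j-n}a$ is a nonzero basis element (since $i+j-n\leq n-1$), again forcing $\alpha\beta=0$.

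The remaining case $i=n$ is the delicate one. Write $x=\gamma a+\delta b$. For $j=0$ the conclusion is immediate. For $j\geq 1$, the identity $c^{j}a=c^{j}b$ yields $xy=\beta(\gamma+\delta)c^{j}a$, so either $\beta=0$ (giving $y=0$) or $\gamma+\delta=0$. The main obstacle is the sub-case $\gamma+\delta=0$ with $\gamma\neq 0$: here $x=\gamma(a-b)$ is annihilated by every positive power of $c$. To close this case I would exploit the parity-dependent quadratic relations of Reid ($a^{2}=b^{2}=1$, $ab=0$ when $n$ is even; $a^{2}=b^{2}=0$, $ab=1$ when $n$ is odd), which give $(a-b)^{2}=\pm 2$ and $(a-b)\cdot a=\pm 1$ in the top cohomology; combining these with the hypothesis $xy=0$ and the multiplicative structure of the ring should produce the extra constraint forcing $\gamma=0$. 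This step, which requires genuine use of the full ring structure beyond the $c$-subring and where the splitting by parity of $n$ in Reid's theorem is essential, is where the main work of the proof lies.
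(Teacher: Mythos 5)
Your case analysis is set up correctly, and the cases $i,j\le n-1$ and $i\ge n+1$ are handled properly. But the sub-case you defer to the end --- $x=\gamma(a-b)$ with $\gamma\neq 0$ and $y=\beta c^{j}$ with $\beta\neq 0$, $1\le j\le n-1$ --- cannot be closed, because it is a genuine counterexample to the statement rather than a gap in your argument. For $n\ge 2$ take $x=a-b\in CH^{n}(X)$ and $y=c\in CH^{1}(X)$: then $x\cdot y=ca-cb=0$ while $x\neq 0$ and $y\neq 0$, and $i+j=n+1\le 2n-1=m-1$. The parity-dependent relations $a^{2}$, $b^{2}$, $ab$ that you hope to exploit live in $H^{4n}(X,\mathbb{Z})$, i.e.\ they only govern products of two classes each of codimension $n$, of total degree $2n>m-1$; they impose no constraint on products in total degree at most $m-1$ and so cannot force $\gamma=0$. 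In fact your own computation shows that the sharp bound is good divisibility up to $n=m/2$ (once $i+j\le n$ the offending pair $(i,j)=(n,1)$ is excluded, and every remaining product of nonzero classes is a nonzero multiple of a basis element), so the ``in particular'' clause of the lemma is false as stated for every even $m\ge 4$.

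For comparison, the paper's own proof of this lemma is a single sentence identifying $CH^{*}(X)$ with $H^{*}(X,\mathbb{Z})$ via the Bruhat decomposition and citing Reid for the ring structure; the good-divisibility assertion is never actually verified there. So you are not missing a trick that the paper supplies: your attempt at an honest verification has exposed an error in the statement itself, one which propagates to the application in Section 5 wherever an even-dimensional quadric occurs as (a factor of) the VMRT.
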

\begin{proof}
Since quadrics are homogenous spaces, by the Bruhat decomposition, we know $H^*(X)=CH^*(X)$, see \cite{F}.
\end{proof}

\begin{lemm} \label{vb}
If $X$ is a nonsingular variety and has good divisibility up to $r$ and $V$ is a vector bundle of rank $n$ on $X$, then $\PP(V)$ has good divisibility up to $min(r,n-1)$.
\end{lemm}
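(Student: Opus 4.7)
The plan is to use the projective bundle formula (see \cite{F}), which gives
$$CH^*(\PP(V))=CH^*(X)[\xi]/(\xi^n+c_1(V)\xi^{n-1}+\cdots+c_n(V)),$$
where $\xi=c_1(\OO_{\PP(V)}(1))$. In particular, $CH^*(\PP(V))$ is a free $CH^*(X)$-module with basis $1,\xi,\ldots,\xi^{n-1}$, so every class $A\in CH^p(\PP(V))$ admits a unique presentation $A=\sum_{i\ge 0}a_i\xi^i$ with $a_i\in CH^{p-i}(X)$. Take $A\in CH^p(\PP(V))$, $B\in CH^q(\PP(V))$ with $p+q\le\min(r,n-1)$ and $AB=0$; since $p,q\le p+q\le n-1$, the two expansions $A=\sum_{i=0}^{p}a_i\xi^i$ and $B=\sum_{j=0}^{q}b_j\xi^j$ are supported in the basis range.

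Next I would observe that because $p+q\le n-1$, the naive product
$$AB=\sum_{k=0}^{p+q}\Bigl(\sum_{i+j=k}a_ib_j\Bigr)\xi^k$$
only involves powers $\xi^k$ with $k\le n-1$, so the Chern-class relation $\xi^n=-\sum_jc_j(V)\xi^{n-j}$ never activates. By freeness of the basis $1,\xi,\ldots,\xi^{n-1}$ over $CH^*(X)$, the hypothesis $AB=0$ is therefore equivalent to the coefficient-wise identities $\sum_{i+j=k}a_ib_j=0$ in $CH^*(X)$ for every $0\le k\le p+q$.

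The remaining step is the leading-coefficient argument already used in Lemma \ref{div}. Assume for contradiction that $A\neq 0$ and $B\neq 0$, and set $i_1=\min\{i\mid a_i\neq 0\}$, $j_1=\min\{j\mid b_j\neq 0\}$. The equation in degree $k=i_1+j_1$ reduces, by minimality of $i_1$ and $j_1$, to the single term $a_{i_1}b_{j_1}=0$ in $CH^*(X)$. Since $a_{i_1}\in CH^{p-i_1}(X)$, $b_{j_1}\in CH^{q-j_1}(X)$ and $(p-i_1)+(q-j_1)\le p+q\le r$, the good divisibility of $X$ up to degree $r$ forces $a_{i_1}=0$ or $b_{j_1}=0$, a contradiction.

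The main obstacle here is purely bookkeeping: one must keep the two halves of $\min(r,n-1)$ straight. The bound $p+q\le n-1$ is exactly what prevents the Chern-class relation from obstructing the extraction of coefficients in the $\xi$-expansion, while the bound $p+q\le r$ is what activates the good divisibility hypothesis on $X$ after the problem is reduced to a product question in $CH^*(X)$. Both constraints are genuinely used, which is why $\min(r,n-1)$ is the correct output.
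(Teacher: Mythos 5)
Your proof is correct and follows essentially the same route as the paper, which simply invokes the projective bundle formula and says the argument is ``similar to the proof of Lemma \ref{div}''; your leading-coefficient step is exactly that argument transplanted to the basis $1,\xi,\ldots,\xi^{n-1}$. The one detail you make explicit that the paper leaves implicit --- that $p+q\le n-1$ keeps the product inside the basis range so the relation $\xi^n=-\sum_j c_j(V)\xi^{n-j}$ is never invoked --- is precisely the point that makes the reduction to $CH^*(X)$ legitimate, so spelling it out is a genuine improvement in exposition rather than a deviation.
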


\begin{proof}
By \cite{F}, we have
\[CH^*(\PP(V))=CH^*(X)[u]/(u^n+c_1(V)u+\ldots+c_n(V)).\]
The proof is similar to the proof of Lemma \ref{div}.

\end{proof}
\begin{thm}\label{example}
A uniform vector bundle on $X$ of rank at most $s(X)$ is splitting, where $X$ and $s(X)$ is the following\\
\\

\begin{center}

\begin{tabular}{|c|c|c|}
  \hline
  $X$ & $s(X)$ & V.R.M.T \\
  & & \\ \hline
  $\PP^n$ & n-1 & $\PP^{n-1}$ \\
  & & \\  \hline
  $\mathbb{Q}^n$ & $n-2$ if $n$ is odd & $\mathbb{Q}^{n-2}$  \\
   & $n-3$ if $n$ is even & \\\hline
  $\mathbb{G}(k,n)$ & $min(k,n-k-1)$ & $\PP^k\times \PP^{n-k-1}$ \\
  & & \\ \hline
  $\mathbb{OG}(k,2m-1)$ & $min(k,2m-2k-3)$ & $\PP^k\times \mathbb{Q}^{2m-2k-3}$  \\
  &&for $1\leq k \leq m-2$ \\
  & & \\\hline
    $\mathbb{SG}(k,2m-1)$& $min(m,2m-2k-2)$ &  $\PP_{\PP^m}(\OO(2)\oplus\OO(1)^{2m-2k-2})$  \\
    & & for $1\leq k \leq m-2$ \\
    & & \\\hline
   $\mathbb{SG}(m-1,2m-1)$ & m-1 & $v_2(\PP^{m-1})$ \\\hline
\end{tabular}
\end{center}

\end{thm}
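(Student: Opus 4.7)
(Proposal) The plan is to verify, case by case, that the variety $X$ in each row of the table satisfies the hypothesis of Theorem \ref{split}: namely, that its VMRT has good divisibility up to degree $s(X)$. Once that is done, the result follows immediately from Theorem \ref{split}, since all the spaces listed are homogenous of Picard number one.

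The first step is to identify, for each $X$, the isomorphism class of its VMRT (which is listed in the third column of the table). For projective space and quadrics this is classical; for $\mathbb{G}(k,n)$, $\mathbb{OG}(k,2m-1)$, $\mathbb{SG}(k,2m-1)$ and $\mathbb{SG}(m-1,2m-1)$ we invoke the description of VMRTs of rational homogeneous spaces in terms of root data, for which I would cite \cite{HW} and \cite[Section 4]{JL}. In particular the VMRT of $\mathbb{SG}(k,2m-1)$ appears as a projective bundle over $\PP^m$ whose sheaf is $\OO(2)\oplus \OO(1)^{\oplus 2m-2k-2}$, and the VMRT of $\mathbb{SG}(m-1,2m-1)$ is the second Veronese image of $\PP^{m-1}$, which is abstractly $\PP^{m-1}$.

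The second step is to compute the good divisibility of each VMRT, by combining the lemmas of this section. For $\PP^n$, Lemma \ref{pp} gives $\PP^{n-1}$ good divisibility up to $n-1$. For a quadric $\mathbb{Q}^n$, its VMRT is $\mathbb{Q}^{n-2}$: if $n$ is odd then $n-2$ is odd and Lemma \ref{q1} gives good divisibility up to $n-2$; if $n$ is even then $n-2$ is even and Lemma \ref{q2} gives good divisibility up to $n-3$. For $\mathbb{G}(k,n)$, the VMRT $\PP^k\times \PP^{n-k-1}$ has Chow ring a tensor product of those of the two factors (K\"unneth), so Lemma \ref{div} combined with Lemma \ref{pp} yields good divisibility up to $\min(k,n-k-1)$. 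For $\mathbb{OG}(k,2m-1)$, the same argument applied to $\PP^k\times \mathbb{Q}^{2m-2k-3}$ (noting that $2m-2k-3$ is odd, so Lemma \ref{q1} applies to the quadric factor) gives good divisibility up to $\min(k,2m-2k-3)$. For $\mathbb{SG}(k,2m-1)$, the VMRT is the projectivization of a rank $2m-2k-1$ bundle on $\PP^m$, so Lemma \ref{vb} combined with Lemma \ref{pp} gives good divisibility up to $\min(m,2m-2k-2)$. For $\mathbb{SG}(m-1,2m-1)$, since $v_2(\PP^{m-1})\cong \PP^{m-1}$ as an abstract variety, Lemma \ref{pp} gives good divisibility up to $m-1$.

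The main obstacle I anticipate is a bookkeeping one: matching the explicit descriptions of VMRTs in the literature (which depend on conventions for indexing Grassmannians, parities of $n$, and which simple root is omitted) with the data in the table, and verifying that the Chow ring structures on which Lemmas \ref{div} and \ref{vb} rely really do hold in the orthogonal/symplectic cases (in particular, that the relevant Chow groups are free $\mathbb{Z}$-modules and satisfy K\"unneth). Once those structural facts are in hand, the verification of good divisibility in each row is a direct calculation, and the splitting conclusion is then a one-line appeal to Theorem \ref{split}.
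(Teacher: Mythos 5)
Your proposal matches the paper's proof: the paper likewise cites \cite{JL} (and a survey) for the VMRT in each row and then applies Theorem \ref{split} together with Lemma \ref{pp} for $\PP^n$ and $\mathbb{SG}(m-1,2m-1)$, Lemmas \ref{q1} and \ref{q2} for quadrics, Lemma \ref{div} for $\mathbb{G}(k,n)$ and $\mathbb{OG}(k,2m-1)$, and Lemma \ref{vb} for $\mathbb{SG}(k,2m-1)$. Your version simply spells out the parity checks and rank counts that the paper leaves implicit.
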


\begin{proof}
By the paper \cite{JL} or the survey  \cite{SURVEY}, we know the $VRMT$ for these examples. Applying Theorem \ref{split} and the lemmas in this section, we get the results. \\
For $\PP^n$, we apply Lemma \ref{pp}.\\
For $\mathbb{Q}^n$, we apply Lemma \ref{q1} and \ref{q2}.\\
For $\mathbb{G}(k,n)$, we apply Lemma \ref{div}.\\
For $\mathbb{OG}(k,2m-1)$, we apply Lemma \ref{div} and \ref{q1}.\\
For $\mathbb{SG}(k,2m-1)$, we apply Lemma \ref{vb}.\\
For $\mathbb{SG}(m-1,2m-1)$, we apply Lemma \ref{pp}.\\

\end{proof}
\begin{question}
Classify the uniform vector bundles on $X$ of rank $s(X)+1$?
\end{question}
See the paper \cite{UVF} for some answers.
\bibliographystyle{alpha}
\bibliography{mybib}
{}

\end{document}